\documentclass[11pt, reqno]{amsart}

\usepackage{epsfig,amsmath,amsthm,amssymb,graphicx,hyperref}
\usepackage{txfonts}
\usepackage[margin=1.25in]{geometry}
\usepackage[usenames,dvipsnames,svgnames,table]{xcolor}
\numberwithin{equation}{section}
\usepackage[normalem]{ulem}
\usepackage{lineno}

\setlength{\unitlength}{1in}

\newtheorem{theorem}{Theorem}[section]
\newtheorem{observation}[theorem]{Observation}
\newtheorem{proposition}[theorem]{Proposition}

\newtheorem{corollary}[theorem]{Corollary}
\newtheorem{lemma}[theorem]{Lemma}
\theoremstyle{definition}
\newtheorem{definition}[theorem]{Definition}
\newtheorem{question}[theorem]{Question}
\newtheorem{remark}[theorem]{Remark}
\newtheorem{example}[theorem]{Example}

\newcommand{\Wh}{\mathrm{Wh}}
\newcommand{\C}{\mathcal{C}}
\newcommand{\T}{\mathcal{T}}

\def\Z{\mathbb{Z}}

\begin{document}
\title[On the Upsilon invariant and satellite knots]{On the Upsilon invariant and satellite knots}

\author{Peter Feller$^\dag$}
\address{Department of Mathematics, ETH Zurich}
\email{peter.feller@math.ch}
\urladdr{https://people.math.ethz.ch/$\sim$pfeller/}

\author{JungHwan Park$^{\dag\dag}$}
\address{School of Mathematics, Georgia Institute of Technology}
\email{junghwan.park@math.gatech.edu }
\urladdr{http://people.math.gatech.edu/$\sim$jpark929/}

\author{Arunima Ray$^{\dag\dag\dag}$}
\address{Max-Planck-Institut f\"{u}r Mathematik}
\email{aruray@mpim-bonn.mpg.de }
\urladdr{http://people.mpim-bonn.mpg.de/aruray}

\date{\today}
\subjclass[2010]{57M25, 57M27}
\keywords{}
\thanks{$^\dag$ Partially supported by Swiss National Science Foundation Grant 155477.}
\thanks{$^{\dag\dag}$ Partially supported by the National Science Foundation grant DMS-1309081.}
\thanks{$^{\dag\dag\dag}$ Partially supported by an AMS--Simons Travel Grant.}

\begin{abstract}
We study the effect of satellite operations on the Upsilon invariant of Ozsv\'{a}th--Stipsicz--Szab\'{o}. We obtain results concerning when a knot and its satellites are independent; for example, we show that the set $\{D_{2^i,1}\}_{i=1}^\infty$ is a basis for an infinite rank summand of the group of smooth concordance classes of topologically slice knots, for $D$ the positive clasped untwisted Whitehead double of any knot with positive $\tau$--invariant, e.g.\ the right-handed trefoil. We also prove that the image of the Mazur satellite operator on the smooth knot concordance group contains an infinite rank subgroup of topologically slice knots.\end{abstract}
\maketitle

\section{Introduction}
Two knots $K$ and $J$ are said to be \emph{smoothly concordant} if they cobound a smoothly embedded annulus in $S^3\times [0,1]$; if they cobound a locally flat annulus, they are said to be \emph{topologically concordant}.
While we will suppress orientations in this text, we note that knots are oriented connected smooth 1-submanifolds (of $S^3$ if not otherwise specified) and in the above definition of concordance the induced orientations of the annulus has to agree on one knot and disagree on the other knot.
A knot is \emph{smoothly slice} if it bounds a smooth disk in $B^4$, or equivalently, is smoothly concordant to the unknot~$U$. Similarly, a knot is \emph{topologically slice} if it bounds a locally flat disk in $B^4$ or is topologically concordant to $U$. Smooth concordance classes of knots form the smooth knot concordance group, denoted $\C$, under the operation of connected sum. Smooth concordance classes of topologically slice knots form a subgroup $\T\subset \C$. The group $\T$ has received substantial attention since, in particular, any non-trivial element of $\T$ may be used to construct an exotic $\mathbb{R}^4$~\cite[Exercise 9.4.23]{GS99}. Our credo is that the distinction between smooth and topological concordance mirrors the distinction between smooth and topological 4--manifolds.

Given a knot $P$ in a solid torus $V$, called a \textit{pattern}, and any knot $K$, called the \textit{companion}, the classical satellite construction yields the \textit{satellite knot} $P(K)$, by tying $V$ into the knot $K$ (see Figure~\ref{fig:satellite}). For example, the \textit{trivial pattern} is the curve core of $V$, and we have $P(K)=K$ for the trivial pattern.
\begin{figure}[h]
  \centering
  \includegraphics[width=4in]{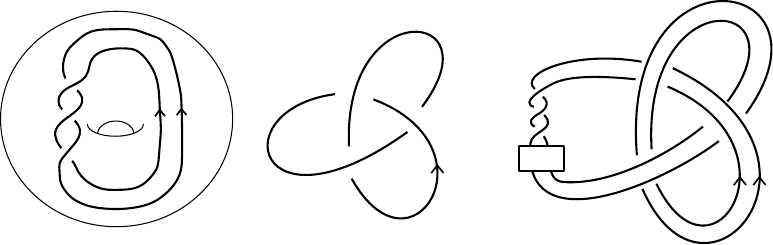}
  \put(-3.5,-0.20){$P$}
	\put(-2.25,-0.20){$K$}
  \put(-0.85,-0.2){$P(K)$}
	\put(-1.23,0.4){\small $3$}
  \caption{The satellite operation on knots. The box containing `$3$' indicates that strands passing vertically through the box are given three full positive twists, to account for the writhe in the given diagram of $K$. Here $P$ has winding number $2$.}\label{fig:satellite}
\end{figure}
Since concordances in either category have tubular neighborhoods, it is easy to see that the satellite operation for any fixed pattern $P$ is well-defined on concordance classes of companions. As a result, any pattern $P$ yields a function on $\mathcal{C}$, called a \textit{satellite operator}, mapping $K\mapsto P(K)$ for any knot $K$. Moreover, whenever $P(U)$ is topologically slice, the pattern $P$ yields a satellite operator $P\colon\T\rightarrow\T$. The \textit{winding number} of a pattern (or satellite operator) $P$ is the algebraic intersection of $P$ with a generic meridional disk of the solid torus containing $P$.

Satellite knots are interesting both within and beyond knot theory. For instance, cable knots were used by Hom~\cite{Hom15} to construct an infinite rank summand of $\T$. Winding number zero satellite operations can be used to construct distinct knot concordance classes which are hard to distinguish using classical invariants~\cite{CHL11,COT04}. They were used in \cite{H08} to modify a 3--manifold without affecting its homology type. Winding number $\pm 1$ satellite operators are related to Mazur 4--manifolds \cite{AkKir79} and Akbulut corks \cite{Ak91}. As a result, there has been considerable interest in understanding the action of satellite operators on $\mathcal{C}$, e.g.\ in the famous conjecture~\cite[Problem 1.38]{kirbylist} that the Whitehead double of a knot $K$ is smoothly slice if and only if $K$ is smoothly slice. Note that all Whitehead doubles are topologically slice~\cite{Freed82}, whereas they are not all smoothly slice~\cite{CG88}.

In~\cite{OzStipSz14}, Ozsv\'{a}th, Stipsicz, and Szab\'{o} introduced the Upsilon invariant of a knot $K$, denoted~$\Upsilon_K$. This invariant considers strictly more of the information contained within the Heegaard Floer complex $CFK^\infty(K)$ of a knot $K$ than the $\tau$--invariant, and is particularly well-suited to studying linear independence of families of knot concordance classes, such as in~\cite{OzStipSz14, Friedl-Livingston-Zentner:15, KimPark16, Wang16, Chen:2016-1}. Our goal in this paper is to demonstrate the power of the Upsilon invariant in addressing questions about linear independence of concordance classes of satellite knots. Our proofs depend on geometric constructions and topological properties of $\Upsilon$ without explicit use of Heegaard Floer theory.

The paper is organized into three independent sections, corresponding to satellite operators with large winding number, winding number one, and winding number zero.
In Section~\ref{sec:otherwindingnumber}, we prove several results about the linear independence of a knot from its cables, and of families of cables and iterated cables. Our main result shows that certain cables of certain knots, e.g.\ the right-handed trefoil, form a basis for an infinite rank summand of $\mathcal{C}$. As a corollary, we show that certain cables of the positive Whitehead double of any knot with positive $\tau$--invariant, e.g.\ the right-handed trefoil, form a basis for an infinite rank summand of $\T$.

\begin{theorem}\label{thm:cablebasis}For any knot $K$ with $\tau(K)=g_c(K)=1$, the knots $\{K_{2^i,1}\}_{i=0}^\infty$ form a basis for an infinite rank summand of $\mathcal{C}$. (Here $g_c(K)$ denotes the concordance genus of $K$.)\end{theorem}

\begin{corollary}\label{cor:whiteheadcablebasis}Let $K$ be a knot with $\tau(K)>0$. Then $\{\Wh^+(K)_{2^i,1}\}_{i=0}^\infty$ is a basis for an infinite rank summand of $\mathcal{T}$. \end{corollary}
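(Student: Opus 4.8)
The plan is to deduce this directly from Theorem~\ref{thm:cablebasis} applied to the knot $\Wh^+(K)$, after checking that $\Wh^+(K)$ meets the hypotheses of that theorem and that the entire family $\{\Wh^+(K)_{2^i,1}\}_{i=0}^\infty$ lies in $\mathcal{T}$.

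First I would record topological sliceness. The Whitehead double of any knot has trivial Alexander polynomial, so $\Wh^+(K)$ is topologically slice by Freedman's theorem; and since $\Delta_{J_{p,1}}(t)=\Delta_J(t^p)$ (the companion torus knot $T_{p,1}$ being unknotted), the cable $\Wh^+(K)_{2^i,1}$ again has trivial Alexander polynomial and hence is topologically slice for every $i\geq 0$. Thus the whole family lies in $\mathcal{T}$.

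Next I would verify the hypotheses of Theorem~\ref{thm:cablebasis}, namely that $\Wh^+(K)$ is $\Upsilon$--simple with $\tau=g_4=g_c=1$. By Proposition~\ref{prop:whiteheaddoubles}, since $\tau(K)>0$ we have $0<2\tau(K)$, so $\Upsilon_{\Wh^+(K)}(t)=-1+\lvert 1-t\rvert$; hence $\Wh^+(K)$ is $\Upsilon$--simple with $\tau(\Wh^+(K))=1$. The proof of Proposition~\ref{prop:whiteheaddoubles} also gives $g_c(\Wh^+(K))\leq 1$, as the Whitehead doubling pattern bounds a genus one surface in $S^1\times D^2$. Combining this with the standard inequalities $1=\lvert\tau(\Wh^+(K))\rvert\leq g_4(\Wh^+(K))\leq g_c(\Wh^+(K))\leq 1$ (here $g_4\leq g_c$ because $g_4$ is a concordance invariant bounded above by Seifert genus) forces $g_4(\Wh^+(K))=g_c(\Wh^+(K))=1$. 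So Theorem~\ref{thm:cablebasis} applies to $\Wh^+(K)$ and yields that $\{\Wh^+(K)_{2^i,1}\}_{i=0}^\infty$ is a basis for an infinite-rank summand of $\mathcal{C}$.

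Finally, to upgrade the conclusion from $\mathcal{C}$ to $\mathcal{T}$, I would invoke the observation following Lemma~\ref{lem:linindep}: since every knot in the family is topologically slice, the splitting homomorphism $\lambda$ built there restricts to $\mathcal{T}$ and its section lands in $\mathcal{T}$, so the same argument exhibits the family as a basis for an infinite-rank summand of $\mathcal{T}$. There is no genuine obstacle here; the only points needing a little care are the genus bookkeeping that pins down $g_c(\Wh^+(K))=1$ and the verification that the summand argument of Lemma~\ref{lem:linindep} descends to the subgroup $\mathcal{T}$, which it does precisely because all of the knots involved already lie in $\mathcal{T}$.
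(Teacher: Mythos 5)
Your proof is correct and follows essentially the same route as the paper: apply Theorem~\ref{thm:cablebasis} to $\Wh^+(K)$ after noting via Proposition~\ref{prop:whiteheaddoubles} that it is $\Upsilon$--simple with $\tau=g_4=g_c=1$, and then descend from $\mathcal{C}$ to $\mathcal{T}$ using the remark after Lemma~\ref{lem:linindep}. You simply spell out more carefully the genus bookkeeping and the topological sliceness of the cables (via the trivial Alexander polynomial), which the paper leaves implicit.
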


Note that the above yields bases for infinite rank summands of $\mathcal{T}$ consisting of knots with Alexander polynomial 1. This should be compared to~\cite{KimPark16} where Kim--Park showed that $\{\Wh^+(RHT)_{n,1}\}_{n=2}^\infty$ is a basis for an infinite rank summand of $\mathcal{C}$. There the authors performed an explicit computation of part of the $\Upsilon$--invariant of $\Wh^+(RHT)_{n,1}$, for $n\geq 2$. Our techniques are comparatively indirect; while we do not address the complete family of $(n,1)$ cables of $\Wh^+(RHT)$, our results apply to a larger family of companion knots. The existence of an infinite rank summand of $\mathcal{T}$ was first shown by Hom in~\cite{Hom15} using her $\varepsilon$--invariant, and later by Ozsv\'{a}th, Stipsicz, and Szab\'{o} in~\cite{OzStipSz14} using their $\Upsilon$--invariant.

In Section~\ref{sec:windingnumberone}, we study winding number one satellite operators. Such operators are of interest since given a winding number one pattern $P$ with $P(U)$ slice, the 0--surgery manifolds of $K$ and $P(K)$ are homology cobordant preserving the homology class of the positively oriented meridian (i.e.\ homology cobordant \textit{rel meridians}); as a result they have the same classical concordance invariants. In~\cite{CFrHeHo13}, it was shown that there exist infinitely many smoothly non-concordant knots with 0--surgeries that are homology cobordant rel meridians. More recently Yasui showed in~\cite{Y15} that there exist smoothly non-concordant knots with homeomorphic 0--surgeries, disproving a conjecture of Akbulut--Kirby from 1978~\cite[Problem 1.19]{kirbylist}. The examples in both~\cite{CFrHeHo13} and~\cite{Y15} are satellite knots. Using the Upsilon invariant, we show the following.

\begin{theorem}\label{cor:cor_yasui}
There is an infinite family of pairs of topologically slice knots $\{(K_i, J_i)\}_{i=0}^\infty$ such that the families $\{K_i\}_{i=0}^\infty$ and $\{J_i\}_{i=0}^\infty$ are each linearly independent in $\mathcal{C}$, each pair $\{K_i, J_i\}$ is linearly independent (with either orientation) in $\mathcal{C}$, and the $0$--surgery manifolds for $K_i$ and $J_i$ are homeomorphic for each $i$.
\end{theorem}

\begin{figure}[t]
\centering
\includegraphics[width=2in]{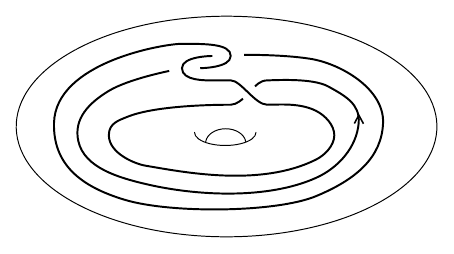}
\caption{The Mazur pattern. We reserve the symbol $M$ for this pattern and the corresponding satellite operator on $\mathcal{C}$ or $\mathcal{T}$, which we call the \textit{Mazur satellite operator}}\label{fig:mazurpattern}
\end{figure}

In~\cite{CHL11}, Cochran--Harvey--Leidy conjectured that some satellite operators are \textit{self-similarities}, which would establish a \textit{fractal} structure on $\mathcal{C}$. This was corroborated in~\cite{CDR14}, where it was shown that infinite classes of winding number one satellite operators (e.g.\ the Mazur satellite operator $M$ shown in Figure~\ref{fig:mazurpattern}) are injective modulo the smooth 4-dimensional Poincar\'{e} Conjecture. Moreover, A.\ Levine showed in~\cite{Lev14} that each iterated function $M^i$ for $i\geq 1$ is non-surjective on $\mathcal{C}$. In contrast, we show that the image of each iterate of the Mazur satellite operator is still `large', even when restricted to $\mathcal{T}$. This is made precise in the following.

\begin{theorem}\label{thm_main}There exists an infinite family of topologically slice knots $\{K_n\}_{n=0}^\infty$ such that, for all non-negative integers $r$, $\{M^r(K_n)\}_{n=0}^\infty$ generates an infinite rank subgroup of $\mathcal{T}$. \end{theorem}

Recall that the Mazur satellite operator is not a group homomorphism (otherwise the result would following from its injectivity). From another perspective, the above result shows that the iterates of the Mazur satellite operator preserve the linear independence of certain families of topologically slice knots.

In Section~\ref{sec:windingnumberszero}, we consider winding number zero satellite operators, particularly the Whitehead doubling operator. As we mentioned earlier, it is a long-standing open question~\cite[Problem 1.38]{kirbylist} whether the Whitehead double of a knot $K$ is smoothly slice if and only if $K$ is smoothly slice. A generalization of this question asks whether Whitehead doubling preserves linear independence. In~\cite{HeK12}, Hedden--Kirk showed that Whitehead doubling preserves the linear independence of certain families of torus knots using moduli spaces of instantons and Chern-Simons invariants of flat connections. Since Upsilon is well-suited to studying linear independence of knots, it is tempting to apply it to the study Whitehead doubles. However, we note in the following proposition that the Upsilon invariant of Whitehead doubles of a knot $K$ contains no more information than the $\tau$--invariant of $K$; in particular, the Upsilon invariant cannot be used to recover Hedden and Kirk's result.

\begin{proposition}\label{prop:whiteheaddoubles} Let $*\in\{+,-\}$, and let $\Wh^*_k(K)$ denotes the $*$--clasped $k$--twisted Whitehead double of the knot $K$. Then,
\[\Upsilon_{\Wh^+_k(K)}(t)=
 \begin{cases}
      0 &\text{if } k\geq 2\tau(K) \\
      -1+\lvert 1-t\rvert &\text{if } k < 2\tau(K)
   \end{cases}
\]
and
\[\Upsilon_{\Wh^-_k(K)}(t)=
 \begin{cases}
      0 &\text{if } k\leq 2\tau(K) \\
      1-\lvert 1-t\rvert &\text{if } k > 2\tau(K).
   \end{cases}
\]
\end{proposition}
Moreover, we explain why $\Upsilon$ can never be used to show the linear independence of an infinite family of winding number zero satellites with a fixed pattern.

For a knot $K$, let $\gamma_4(K)$ denote the minimum first Betti number of a smoothly embedded, connected, compact, possibly non-orientable surface bounded by $K$ in $B^4$. The quantity $\gamma_4$ is called the \textit{smooth 4--dimensional crosscap number} of $K$. In~\cite[Theorem $2$]{Bat14}, Batson showed that $\gamma_4$ of torus knots can be arbitrarily large. More recently in~\cite[Corollary $1.4$]{OzStipSz15} Ozsv\'{a}th--Stipsicz--Szab\'{o} showed that $\gamma_4$ of connected sums of $T_{3,4}$ is arbitrarily large. The following corollary of Proposition~\ref{prop:whiteheaddoubles} adds to these results.

\begin{corollary}\label{cor:crosscapnumber}Let $K$ be a knot with $\tau(K)>0$. Then, for each $k\geq 1$, $\gamma_4(\#_k\Wh^+(K))\geq k$.

Thus, the smooth 4--dimensional crosscap number of topologically slice knots can be arbitrarily large. \end{corollary}

Lastly, we note in passing that this project was motivated by the following question.

\begin{question}\label{q:main}
Does there exists a knot $K$ such that $\{M^n(K)\}_{n=0}^\infty$ is linearly independent in $\mathcal{C}$, or better, is a basis for an infinite rank summand of $\mathcal{C}$? If yes, can this knot be chosen to be topologically slice?
\end{question}

In~\cite{R15} it was shown that there exist knots $K$, including infinitely many topologically slice knots, such that the iterated satellites $M^n(K)$ are all distinct in smooth concordance; this was shown using the $\tau$--invariant. Since $\Upsilon$ can be seen as a generalization of $\tau$, the above is a natural followup question. We do not currently know the answer to Question~\ref{q:main}.

\subsection*{Acknowledgements}The second author would like to thank his advisor Shelly Harvey for her guidance and helpful discussions. We are grateful to the anonymous referee for the detailed and thoughtful suggestions.

\section{Preliminaries}

For ease of reference, we summarize below some of the properties of the $\Upsilon$--invariant given in~\cite{OzStipSz14, OzStipSz15}. These are the only properties of the $\Upsilon$--invariant that we use in this paper.

\begin{proposition}[\cite{OzStipSz14, OzStipSz15}]\label{prop:upsilon_properties} For any knot $K$, the Upsilon invariant, denoted $\Upsilon_K$, is a piecewise linear function $\Upsilon_K: [0,2]\rightarrow \mathbb{R}$ with the following properties.
\begin{enumerate}
\item \cite[Corollary 1.12]{OzStipSz14} $\Upsilon$ is a concordance invariant, i.e\
\begin{align*}
\Upsilon\colon\mathcal{C}\rightarrow &Cont[0,2]\\
K\mapsto &\Upsilon_K
\end{align*}
is a group homomorphism, where $Cont[0,2]$ is the vector space of continuous functions on $[0,2]$. If $rK$ denotes the reverse of the knot $K$, then $\Upsilon_K=\Upsilon_{rK}$.
\item \cite[Proposition 1.5]{OzStipSz14} $\Upsilon_K(0)=0$.
\item \cite[Proposition 1.2]{OzStipSz14} $\Upsilon_K(t)=\Upsilon_K(2-t)$.
\item \cite[Proposition 1.4, Theorem 1.13]{OzStipSz14} Each of the finitely many slopes of $\Upsilon_K$ is an integer. For any such slope $s$, $$\lvert s\rvert \leq  g_c(K).$$
\item \cite[Proposition 1.6]{OzStipSz14} The slope of $\Upsilon_K$ at $t = 0$ is $-\tau(K)$.
\item \cite[Proposition 1.7]{OzStipSz14} For any $t \in [0, 2]$, $t\cdot \Delta \Upsilon'_K(t)$ is an even integer, where
$$\Delta \Upsilon'_K(t_0) = \lim_{t\rightarrow t_0^+} \Upsilon'_K(t) - \lim_{t\rightarrow t_0^-}\Upsilon'_K(t)$$
 for any $t_0\in[0,2]$.
\item \cite[Proposition 1.10]{OzStipSz14} If $K_-$ is the knot obtained by changing a positive crossing of $K_+$ to a negative crossing, then for $0\leq t\leq 1$,
$$\Upsilon_{K_-}(t)-t \leq \Upsilon_{K_+}(t)\leq \Upsilon_{K_-}(t).$$
\item \cite[Theorem 1.11]{OzStipSz14} For $0 \leq t \leq 1$, $$\lvert \Upsilon_K(t)\rvert \leq  t \cdot g_4(K).$$
\item \cite[Theorem 1.2]{OzStipSz15} Let $\upsilon(K)$ denote the value $\Upsilon_K(1)$, and $\gamma_4(K)$ denote the
\textit{smooth 4--dimensional crosscap number} of $K$. Then

$$\Biggl| \upsilon(K) - \frac{\sigma(K)}{2}\Biggr| \leq \gamma_4(K)$$
\end{enumerate}
\end{proposition}

\begin{observation}\label{obs:slopes}Let $m_1$ and $m_2$ be two consecutive slopes in $\Upsilon_K$ for a knot $K$, i.e.\ there is some point $t_0\in(0,1]$ where the slope of $\Upsilon_K$ changes from $m_1$ to $m_2$, with $m_1\neq m_2$. Then $\lvert m_2-m_1\rvert \geq 2$. Moreover,
\begin{enumerate}
\item $t_0\in \left\{ \dfrac{2}{\lvert m_2-m_1\rvert},\dfrac{4}{\lvert m_2-m_1\rvert},\cdots,\dfrac{2 \lfloor \frac{\lvert m_2-m_1\rvert}{2}\rfloor}{\lvert m_2-m_1\rvert}\right\}$
\item if $\lvert m_2-m_1\rvert =2$, then $t_0=1$ and $m_2=-m_1=\pm 1$.
\end{enumerate}
\end{observation}

\begin{proof}By Proposition~\ref{prop:upsilon_properties}(4), $m_1,m_2\in\mathbb{Z}$. By Proposition~\ref{prop:upsilon_properties}(6), $t_0(m_2-m_1)=2k$ for some $k\in\mathbb{Z}$. If $\lvert m_2-m_1\rvert =1$, then $t_0$ must be an even integer, a contradiction. For (1), note that $t_0=\dfrac{2k}{m_2-m_1}$. For (2), we note that by (1)  when $\lvert m_2-m_1\rvert =2$, $t_0$ is an integer, and therefore $t_0=1$. Then $m_2=-m_1$ by symmetry (Proposition~\ref{prop:upsilon_properties}(3)). \end{proof}

The above result immediately yields a bound on where the singularities of $\Upsilon$ may occur, which we use in Section~\ref{sec:otherwindingnumber}.

\begin{observation}\label{obs:concgenusbound}For any knot $K$ with $g_c(K)>0$, the singularities of $\Upsilon_K$, if any, are located in $\left[\frac{1}{g_c(K)},\frac{2g_c(K)-1}{g_c(K)}\right]$. \end{observation}
\begin{proof}From Proposition~\ref{prop:upsilon_properties}(4), we know that the largest slope change possible in $\Upsilon_K$ is $2g_c(K)$. The result then follows from Observation~\ref{obs:slopes}(1).
\end{proof}

Since we are primarily interested in linearly independent subsets of abelian groups, we make the notion precise.

\begin{definition}Let $G$ be an abelian group. A subset $A\subseteq G$ is said to be \textit{linearly independent} if every $n$--element subset of $A$ generates a free abelian subgroup of $G$ of rank $n$, for all $n$. A subgroup of $G$ is said to have \textit{infinite rank} if it contains an infinite linearly independent subset.

A subset $A\subseteq G$ is said to be a \textit{basis} for a free summand of $G$ if $A$ is linearly independent and the subgroup $F$ of $G$ that it generates is a
summand of $G$, i.e.\ there exists a subgroup $H\leq G$ such that $G=F\oplus H$.\end{definition}

The following result from~\cite{OzStipSz14} shows how to use $\Upsilon$ to detect linear independence.

\begin{lemma}[\cite{OzStipSz14}]\label{lem:linindep}
Let $\{K_i\}_{i\in I}$, where $I\subseteq \mathbb{N}$, be a family of knots. Assume that for each $i\in I$, there exists a singularity $t_i$ of $\Upsilon_{K_i}$ such that $t_i$ is not a singularity of
$\Upsilon_{K_j}$ for any $j<i$. Then $\{K_i\}_{i\in I}$ is linearly independent in $\mathcal{C}$.

If, in addition, the above $t_i=\frac{p_i}{q_i}$, where $p_i$ and $q_i$ are coprime, can be chosen such that \[\begin{array}{cc}
\frac{\Delta \Upsilon'_{K_i}(t_i)}{2q_i}\in\{1,-1\}&\text{if $p_i$ is odd}\\
\frac{\Delta \Upsilon'_{K_i}(t_i)}{q_i}\in\{1,-1\}&\text{if $p_i$ is even}\end{array}
,\] then $\{K_i\}_{i\in I}$ is a basis for a free direct summand of $\mathcal{C}$. \end{lemma}

\begin{proof}Consider the group homomorphism
\[\lambda\colon \mathcal{C}\to \bigoplus_I\Z\]
given componentwise by
\[\lambda_i(K)=\begin{cases}
\frac{\Delta \Upsilon'_{K}(t_i)}{2q_i}&\text{if $p_i$ is odd,}\\
\frac{\Delta \Upsilon'_{K}(t_i)}{q_i}&\text{if $p_i$ is even,}
\end{cases}
\] for the given $t_i=\frac{p_i}{q_i}$. Note that this is well-defined since $\frac{\Delta \Upsilon'_{K}(t_i)}{q_i}$ (and $\frac{\Delta \Upsilon'_{K}(t_i)}{2q_i}$ if $p_i$ is odd) is an integer, which follows from $\Delta \Upsilon'_{K}(t_i)$ being an integer (by Proposition~\ref{prop:upsilon_properties}(4)) and $p_i\Delta \Upsilon'_{K}(t_i)=2kq_i$ for some integer $k$ (by Proposition~\ref{prop:upsilon_properties}(6)).
By assumption, for any $i\in I$, $\lambda_i(K_i)\neq 0$ and $\lambda_i(K_j)= 0$ for all $j<i$ in $I$. Therefore,
$\{\lambda(K_i)\}_{i\in I}$ is a linearly independent subset of $\bigoplus_I\Z$. This proves the first half of the lemma.

For the second part, by assumption, $\lambda_i(K_i)\in\{1,-1\}$, and thus, $\lambda\colon \mathcal{C}\to \bigoplus_I\Z$ is surjective. Then,
\[0\to \ker(\lambda)\to\mathcal{C}\to\bigoplus_I\Z\to0\]
is a short exact sequence with a splitting given by the (unique) section $\phi\colon \bigoplus_I\Z\to \mathcal{C}$ that maps the basis $(\lambda(K_i))_{i\in I})$ of $\bigoplus_I \Z$ to $(K_i)_{i\in I}$.
\end{proof}

From the above, it is straightforward to see that if the first singularities of the $\Upsilon$--invariants of a family of knots are distinct, the family is linearly independent in $\mathcal{C}$. Moreover, if the slope change at the first singularities is the smallest allowed by Proposition~\ref{prop:upsilon_properties}(6), the family forms a basis for a free summand of $\mathcal{C}$. If the knots happen to be topologically slice, the same proof shows that the family is a basis for a free summand of $\mathcal{T}$ (compare to~\cite[Lemma 6.4]{OzStipSz14}).

\section{Winding number zero satellites}\label{sec:windingnumberszero}

By Proposition~\ref{prop:upsilon_properties}(4), the number of possibilities for $\Upsilon$ of a knot with low concordance genus are quite small. As a trivial example, note that if the concordance genus of a knot $K$ is zero, then $K$ is slice and thus $\Upsilon_K$ is the zero function. For concordance genus one knots, $\Upsilon$ is determined by the $\tau$--invariant as follows.

\begin{proposition}\label{prop:concgenusone}Let $K$ be a knot with $g_c(K)=1$. Then $\Upsilon_K$ is either the zero function (iff $\tau(K)=0$), $\Upsilon_{T_{2,3}}$ (iff $\tau(K)=1$), or $\Upsilon_{-T_{2,3}}$ (iff $\tau(K)=-1$).\end{proposition}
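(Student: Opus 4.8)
The plan is to use the concordance-genus bound on the slope changes of $\Upsilon_K$ together with the symmetry and quantization properties collected in Proposition~\ref{prop:upsilon_properties}. First I would invoke Observation~\ref{obs:concgenusbound}: since $g_c(K)=1$, every singularity of $\Upsilon_K$ lies in $[\tfrac12 \cdot \tfrac{1}{1}, \tfrac12\cdot\tfrac{2\cdot1-1}{1}] = [1,1]$, i.e.\ the only possible singularity is at $t=1$. Hence on $[0,1]$ the function $\Upsilon_K$ is linear, and by Proposition~\ref{prop:upsilon_properties}(4),(5) it is exactly the line $t\mapsto -\tau(K)\,t$. By Proposition~\ref{prop:upsilon_properties}(9), the maximal slope $s$ of $\Upsilon_K$ satisfies $s\le g_c(K)=1$, so the slope on $[0,1]$, namely $-\tau(K)$, is at most $1$; combined with the standard bound $|\tau(K)|\le g_c(K)=1$ (or directly from (9) applied to $-K$, whose concordance genus is also $1$), we get $\tau(K)\in\{-1,0,1\}$.

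Next I would pin down the behavior on $[1,2]$. By the symmetry $\Upsilon_K(t)=\Upsilon_K(2-t)$ (Proposition~\ref{prop:upsilon_properties}(2)), the graph on $[1,2]$ is the mirror of the graph on $[0,1]$, so $\Upsilon_K$ is the piecewise-linear function with value $-\tau(K)$ at $t=1$, slope $-\tau(K)$ on $[0,1]$ and slope $+\tau(K)$ on $[1,2]$; equivalently $\Upsilon_K(t) = \tau(K)\cdot(-1+|1-t|)$. If $\tau(K)=0$ this is the zero function, and since $g_4(K)\le g_c(K)$ forces $g_4(K)\le 1$, one can note that the zero function is realized (e.g.\ $K$ itself has $g_c=1$ with $\tau=0$, or simply observe $\Upsilon_K\equiv 0$ directly); if $\tau(K)=\pm1$ the formula is precisely $\Upsilon_{\pm T_{2,3}}(t)$, which one computes once and for all from the genus-one torus knot (or cites from~\cite{OzStipSz14}). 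Finally I would check the converse directions of the ``iff'' statements: these are immediate because $\tau$ is the negative of the slope of $\Upsilon_K$ at $0$, so the three cases $\tau(K)=0,1,-1$ are mutually exclusive and exhaustive among genus-one knots.

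I do not anticipate a serious obstacle; the only mild subtlety is making sure there is no singularity strictly inside $(0,1)$, which is exactly what Observation~\ref{obs:concgenusbound} rules out, and checking that slope $+1$ (rather than something larger) is forced at $t=1$ — this follows from Observation~\ref{obs:slopes}(2), since the only allowed slope change of size $2$ occurs at $t=1$ with slopes $\pm1$, and larger slope changes are excluded by $g_c(K)=1$ via Proposition~\ref{prop:upsilon_properties}(9). Assembling these pieces gives the three-way classification. The whole argument is short; the ``hard part'' is really just being careful with the bookkeeping of which slope changes Proposition~\ref{prop:upsilon_properties}(6) and (9) permit.
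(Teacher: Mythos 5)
Your argument is correct and follows essentially the same route as the paper: locate any singularity at $t=1$ via Observations~\ref{obs:slopes} and~\ref{obs:concgenusbound}, bound $\tau(K)$ by $g_c(K)$, and use symmetry together with Proposition~\ref{prop:upsilon_properties}(5) to pin down the function and the converses. (Minor slip: the interval from Observation~\ref{obs:concgenusbound} is $\bigl[\tfrac{1}{g_c(K)},\tfrac{2g_c(K)-1}{g_c(K)}\bigr]=[1,1]$ with no factor of $\tfrac12$; your stated conclusion $[1,1]$ is nonetheless the right one.)
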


Recall from~\cite{OzStipSz14} that $\Upsilon_{T_{2,3}}(t)=-1+\lvert 1-t\rvert$ and thus $\Upsilon_{-T_{2,3}}(t)=1-\lvert 1-t\rvert$.

\begin{proof}[Proof of Proposition~\ref{prop:concgenusone}] By Observation~\ref{obs:concgenusbound}, the slope of $\Upsilon$ can only change at $t=1$ and the only possible slopes are $1$, $0$, and $-1$ by Proposition~\ref{prop:upsilon_properties}(4). Thus, at $t=1$, either the slope changes from $-1$ to $1=\tau(K)$, from $1$ to $-1=\tau(K)$, or stays constant at $0=-\tau(K)$ by Proposition~\ref{prop:upsilon_properties}(5). Together with Proposition~\ref{prop:upsilon_properties}(2), this shows that $\Upsilon$  behaves as claimed.
\end{proof}

For a knot $K$ with concordance genus two, it is easy to check by Observation~\ref{obs:slopes}, that if $\tau(K)=0$ there is exactly one possibility for $\Upsilon$ (namely, the zero function), while if $\tau(K)$ is $1$ or $-1$ there are two possibilities each, and when $\tau(K)$ is $2$ or $-2$, there are four possibilities each, i.e.\ there are thirteen possible forms for $\Upsilon$. Using the same principle, it is straightforward to see that for any fixed concordance genus, there is a finite list of possibilities for the $\Upsilon$--invariant, although the size of the list grows fast. 

Note that any winding number zero pattern $P\subseteq S^1\times D^2$ bounds an orientable surface within $S^1\times D^2$; let $g(P)$ denote the least genus of such a surface. Then the concordance genus of satellites with pattern $P$ is bounded above by $g(P)$ and thus there  is a (possibly large) finite list of possibilities for $\Upsilon$. This shows that $\Upsilon$ can never be used to show the linear independence of an \emph{infinite} family of winding number zero satellites with a fixed pattern (cf.\ Section~\ref{sec:windingnumberone} where we show the linear independence of infinite families of winding number one satellites with a fixed pattern).

From Proposition~\ref{prop:concgenusone}, we can immediately compute the Upsilon functions of all Whitehead doubles by applying Hedden's calculation of $\tau$~\cite[Theorem 1.5]{He07}. We recall Proposition~\ref{prop:whiteheaddoubles} and prove it.

\newtheorem*{prop:whiteheaddoubles}{Proposition~\ref{prop:whiteheaddoubles}}
\begin{prop:whiteheaddoubles}Let $*\in\{+,-\}$, and let $\Wh^*_k(K)$ denotes the $*$--clasped $k$--twisted Whitehead double of the knot $K$. Then,
\[\Upsilon_{\Wh^+_k(K)}(t)=
 \begin{cases}
      0 &\text{if } k\geq 2\tau(K) \\
      -1+\lvert 1-t\rvert &\text{if } k < 2\tau(K)
   \end{cases}
\]
and
\[\Upsilon_{\Wh^-_k(K)}(t)=
 \begin{cases}
      0 &\text{if } k\leq 2\tau(K) \\
      1-\lvert 1-t\rvert &\text{if } k > 2\tau(K).
   \end{cases}
\]
\end{prop:whiteheaddoubles}

\begin{proof}The Whitehead doubling pattern $\Wh\subseteq S^1\times D^2$ bounds a genus one surface within $S^1\times D^2$. Thus, $g_c(\Wh^*_k(K))\leq g(\Wh^*_k(K))\leq 1$ for any $k\in\mathbb{Z}$. The result then follows from Proposition~\ref{prop:concgenusone} and~\cite[Theorem 1.5]{He07}, which states that if $k\geq 2\tau(K)$, then $\tau(\Wh^+_k(K))=0$ and if $k< 2\tau(K)$, $\tau(\Wh^+_k(K))=1$. For the second statement we use the fact that $\Wh^-_k(K)=m\Wh^+_{-k}(mK)$ and $\tau(mK)=-\tau(K)$.\end{proof}



The above proposition determines $\Upsilon$ of twist knots, which by definition are $\Wh^\pm_k(K)$ with $K$ taken to be the unknot. Additionally, we see that $\Upsilon$ is no better than $\tau$ at distinguishing between Whitehead doubles. We can also address the generalized Whitehead doubles $D_{J,s}(K,k)$ studied by A.\ Levine in~\cite{Lev12}. Levine showed that $\tau(D_{J,s}(K,k))= 1$ if and only if $s<2\tau(J)$ and $k<2\tau(K)$, $\tau(D_{J,s}(K,k))= -1$ if and only if $s>2\tau(J)$ and $k>2\tau(K)$, and $\tau(D_{J,s}(K,k))=0$ otherwise. Moreover, $D_{J,s}(K,0)$ is topologically slice. It was also shown that $g(D_{J,s}(K,k))=1$ by~\cite[Figure 2b]{Lev12}. By Proposition~\ref{prop:concgenusone} we then have the following result.

\begin{corollary}\label{cor:genwhiteheaddoubles}Let $D_{J,s}(K,k)$ denote the generalized Whitehead doubles from~\cite{Lev12}. Then
\[\Upsilon_{D_{J,s}(K,k)}(t)=
 \begin{cases}
      -1+\lvert 1-t\rvert &\text{if } s<2\tau(J) \text{ and } k<2\tau(K)\\
      1-\lvert 1-t\rvert &\text{if } s>2\tau(J) \text{ and } k>2\tau(K)\\
      0 &\text{otherwise} \\
   \end{cases}
\]
\end{corollary}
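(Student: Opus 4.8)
The plan is to reduce the statement about $\Upsilon_{D_{J,s}(K,k)}$ entirely to data about $\tau(D_{J,s}(K,k))$, mirroring the proof of Proposition~\ref{prop:whiteheaddoubles}. First I would invoke the fact, cited in the text from~\cite[Figure 2b]{Lev12}, that the generalized Whitehead double pattern $D_{J,s}\subseteq S^1\times D^2$ bounds a genus one orientable surface inside the solid torus. This is a winding number zero pattern, so for any companion $K$ and any framing $k$ the satellite $D_{J,s}(K,k)$ bounds a genus one Seifert surface obtained by tubing this surface along a Seifert surface of $K$; hence $g(D_{J,s}(K,k))=1$ and in particular $g_c(D_{J,s}(K,k))\leq 1$.

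Next I would apply Proposition~\ref{prop:concgenusone}: a knot of concordance genus at most one has $\Upsilon$ equal to the zero function when $\tau=0$, to $\Upsilon_{T_{2,3}}(t)=-1+\lvert 1-t\rvert$ when $\tau=1$, and to $\Upsilon_{-T_{2,3}}(t)=1-\lvert 1-t\rvert$ when $\tau=-1$. (If $g_c=0$ the knot is slice and both $\tau$ and $\Upsilon$ vanish, which is consistent.) So the entire $\Upsilon$--function is pinned down by the value of $\tau(D_{J,s}(K,k))$.

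Finally I would substitute Levine's computation of $\tau$, quoted just before the corollary: $\tau(D_{J,s}(K,k))=1$ iff $s<2\tau(J)$ and $k<2\tau(K)$; $\tau(D_{J,s}(K,k))=-1$ iff $s>2\tau(J)$ and $k>2\tau(K)$; and $\tau(D_{J,s}(K,k))=0$ in all remaining cases. Matching these three cases against the three possible $\Upsilon$--functions from Proposition~\ref{prop:concgenusone} yields exactly the claimed formula.

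There is no real obstacle here: the corollary is a direct assembly of the genus bound on the pattern (from~\cite{Lev12}), Proposition~\ref{prop:concgenusone}, and Levine's $\tau$--computation (also from~\cite{Lev12}). The only point requiring a sentence of care is the genus claim for the satellite itself: one must note that a genus one surface for the pattern in $S^1\times D^2$, together with a Seifert surface for the companion, produces a genus one Seifert surface for $D_{J,s}(K,k)$ in $S^3$ regardless of the twisting parameter $k$, so that the concordance genus hypothesis of Proposition~\ref{prop:concgenusone} is genuinely satisfied. Everything else is bookkeeping.
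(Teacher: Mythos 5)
Your argument is correct and is exactly the paper's: the genus-one bound on the pattern from \cite[Figure 2b]{Lev12} gives $g_c(D_{J,s}(K,k))\leq 1$, Proposition~\ref{prop:concgenusone} then determines $\Upsilon$ from $\tau$, and Levine's computation of $\tau(D_{J,s}(K,k))$ completes the case analysis. One small simplification: since the pattern has winding number zero, the genus-one surface in $S^1\times D^2$ is carried directly into $S^3$ by the satellite embedding, so no tubing along a Seifert surface of the companion is needed.
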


The above results can be restated as follows. The $\Upsilon$--invariant for knots $K$ with concordance genus one has at most one singularity (which must occur at $t=1$), i.e.
\begin{equation*}\label{eq:upssimple}
\Upsilon_{K}(t)=\tau(K)\cdot(-1+\lvert 1-t\rvert).
\end{equation*}
There are several other well-studied families of knots for which $\Upsilon$ has at most one singularity
, e.g.\ quasi-alternating knots~\cite[Theorem $1.14$]{OzStipSz14}. 
Clearly, concordance classes of knots for which $\Upsilon$ has at most one singularity form a subgroup of $\mathcal{C}$. We end this section by recalling Corollary~\ref{cor:crosscapnumber} and proving it.

\newtheorem*{cor:crosscapnumber}{Corollary~\ref{cor:crosscapnumber}}
\begin{cor:crosscapnumber}Let $K$ be a knot with $\tau(K)>0$. Then, for each $k\geq 1$, $\gamma_4(\#_k\Wh^+(K))\geq k$.

Thus, the smooth 4--dimensional crosscap number of topologically slice knots can be arbitrarily large.  \end{cor:crosscapnumber}

\begin{proof} Any Whitehead double is topologically slice~\cite{Freed82} and thus has signature zero. By Proposition~\ref{prop:whiteheaddoubles}, we have $\Upsilon_{\Wh^+(K)}=(-1+\lvert 1-t\rvert)$; in particular, $\upsilon(\Wh^+(K))=-1$. Then we directly apply Proposition~\ref{prop:upsilon_properties}(1) and (9), to see that
\[k=\lvert\upsilon(\#_k\Wh^+(K))\rvert \leq \gamma_4(\#_k\Wh^+(K)). \qedhere
\]
\end{proof}


Note that by Proposition~\ref{prop:concgenusone}, the above holds for any knot $J$ with signature 0 (e.g.\ any topologically slice knot) and $\tau(J)=g_c(J)=1$ rather than just Whitehead doubles of knots $K$ with $\tau(K)>0$.

\section{Winding number one satellites}\label{sec:windingnumberone}

\begin{proposition}\label{prop:KandP(K)linindep}Let $K$ be a knot with $\tau(K)>0$ such that the first singularity of $\Upsilon_K$ occurs at $t_0<1$. Let $\alpha$ be the slope of $\Upsilon_K$ at $t_0+\epsilon$.
Let $J$ be a knot that can be changed to the knot K by changing $r>0$ positive crossings to negative crossings and for which $\tau(J)=\tau(K)+r$.
Assume that
\begin{enumerate}
\item $\alpha \neq n\tau(K)$ for any positive integer $n$,
\item $t_0\cdot r$ is not an even integer, and
\item $r$ and $\tau(K)$ are coprime.
\end{enumerate}
Then $\{K, J\}$ is linearly independent. \end{proposition}
Note that conditions $(2)$ and $(3)$ above are satisfied trivially if $r=1$. Condition $(1)$ is satisfied if $\Upsilon(K)$ is convex.
\begin{example}\label{ex:KandP(K)linindep}
Let $P$ be a pattern that can be changed to the trivial pattern by
changing $r>0$ positive crossings to negative crossings, $\tau(P(K))=\tau(K)+r$, and
$K$ satisfies the 3 last assumptions from Proposition~\ref{prop:KandP(K)linindep}. Then $\{K,P(K)\}$ is linearly independent since the sequence of $r$ crossing changes turning $P$ into the trivial pattern can be used to transform $P(K)$ to $K$.
\end{example}

\begin{proof}[Proof of Proposition~\ref{prop:KandP(K)linindep}]
By Lemma~\ref{lem:linindep}, if the first singularity of $\Upsilon_{J}$ does not occur at $t_0$, then $\{K, J\}$ is linearly independent. So, we are left with the case where the first singularity of $\Upsilon_{J}$ occurs at $t_0$.
By Proposition~\ref{prop:upsilon_properties}(7),
$$|\Upsilon_{K}(t)-\Upsilon_{J}(t)|\leq  r\cdot t$$ for any $t\leq 1$. Therefore, we see that the slope of $\Upsilon_{J}$ at $t_0+\epsilon$ must be $\alpha-r+\ell$ for some non-negative $\ell\in \mathbb{Z}$ (recall that the slopes of any $\Upsilon$ function are always integers by Proposition~\ref{prop:upsilon_properties}(4)).

Assume towards a contradiction that there exist non-zero integers $p,q$ for which $pJ$ is concordant to $qK$.
This yields
$$\frac{-\tau(K)}{-\tau(J)}=\frac{\alpha}{\alpha-r+\ell},$$
which we rewrite as $\alpha=\tau(K)\cdot\left(\frac{\ell}{r}-1\right)$ using the fact that $\tau(J)=\tau(K)+r$. Note that $\alpha=0$ if and only if $\ell=r$, since $\tau(K)>0$. We will address the situation of $\ell=r$ momentarily. Suppose that $\alpha\neq 0$. Then, since $\alpha$ and $\tau(K)$ are both integers with $(r,\tau(K))=1$, we must have that $r\vert \ell$, and moreover, by hypothesis, $\left(\frac{\ell}{r}-1\right)\leq 0$ and thus, $\ell\leq r$. This implies that either $\ell=0$ or $\ell=r$.

If $\ell=0$, then $\alpha=-\tau(K)$ which contradicts the fact that $t_0$ is a singularity of $\Upsilon_K$.

If $\ell=r$, we saw that  $\alpha=0$.
By Proposition~\ref{prop:upsilon_properties}(6), we know that $t_0\cdot (\alpha+\tau(K))=t_0\cdot\tau(K)$ and $t_0\cdot(\alpha-r+\ell+\tau(J))=t_0\cdot(\tau(K)+r)$ must both be even integers, which is impossible unless $t_0\cdot r$ is an even integer. \end{proof}

Several families of patterns and knots satisfying the requirements of Proposition~\ref{prop:KandP(K)linindep} are given in~\cite{R15}. In particular, we have the following corollary for the Mazur pattern, denoted by $M$.

\begin{corollary}\label{cor:KandM(K)linindep}Let $K$ be a knot with $\tau(K)>0$, such that the first singularity of $\Upsilon_K$ occurs at $t_0<1$. Let $\alpha$ be the slope of $\Upsilon_K$ at $t_0+\epsilon$. Assume that $\alpha\neq n\tau(K)$ for any positive integer $n$. Then $\{K, M(K)\}$ is linearly independent.\end{corollary}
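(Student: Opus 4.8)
The plan is to derive the corollary directly from Proposition~\ref{prop:KandP(K)linindep}, taking $P$ to be the Mazur pattern $M$ and checking that every hypothesis of that proposition is satisfied. First I would record the two facts about $M$ that we need, both read off from Figure~\ref{fig:mazurpattern} together with Levine's analysis in~\cite{Lev14}: (a) the Mazur pattern is converted to the trivial (core) pattern in $S^1\times D^2$ by changing a single positive crossing to a negative one, so that in the notation of Proposition~\ref{prop:KandP(K)linindep} we may take $r=1$; and (b) for every knot $K$ with $\tau(K)>0$ one has $\tau(M(K))=\tau(K)+1$, which is precisely the relation $\tau(P(K))=\tau(K)+r$ demanded in the hypothesis of the proposition.

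With these in hand, the four numbered conditions of Proposition~\ref{prop:KandP(K)linindep} are immediate. Condition (1) holds since $r=1\neq 0$. Condition (2), that $\alpha\neq n\tau(K)$ for every positive integer $n$, is exactly the standing assumption of the corollary. Condition (3) asks that $t_0\cdot r=t_0$ not be an even integer; but $t_0$ is a genuine singularity, hence $t_0>0$, and by hypothesis $t_0<1$, so $t_0\in(0,1)$ is certainly not an even integer. Condition (4) requires $(r,\tau(K))=1$, which is automatic because $r=1$. Proposition~\ref{prop:KandP(K)linindep} then yields that $\{K,M(K)\}$ is linearly independent, completing the argument.

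Since the corollary is a genuine specialization of the proposition, I do not anticipate any new difficulty; the one point I would check most carefully is the bookkeeping for the Mazur pattern itself, namely confirming both that the crossing change unknotting $M$ is indeed positive-to-negative (so that the sign conventions of Proposition~\ref{prop:KandP(K)linindep} are respected, with $r=1>0$) and that Levine's computation gives $\tau(M(K))=\tau(K)+1$ on the nose for all $K$ with $\tau(K)>0$, rather than some other value. Everything else is a direct substitution into the already-established proposition.
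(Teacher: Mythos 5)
Your proof is correct and follows exactly the paper's route: the paper likewise deduces the corollary from Proposition~\ref{prop:KandP(K)linindep} by noting that $M$ is unknotted by a single positive-to-negative crossing change (so $r=1$) and that $\tau(M(K))=\tau(K)+1$ by Levine, with conditions (1), (3), (4) then automatic. Your explicit verification of the four hypotheses is just a slightly more detailed writeup of the same argument.
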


\begin{proof}By~\cite{Lev14}, we know that $\tau(M(K))=\tau(K)+1$. It is easy to see that the pattern $M$ is changed to the trivial pattern by changing a single positive crossing (at the clasp) to a negative crossing. Thus we can apply Proposition~\ref{prop:KandP(K)linindep} with $r=1$. \end{proof}

If we restrict to knots with convex $\Upsilon$, we can go further, as follows.

\begin{corollary}\label{cor:torusknots_linindep}Let $K$ denote the torus knot $T_{p,q}$, where $3\leq p<q$. Then the pair $\{K, M^i(K)\}$ is linearly independent in $\mathcal{C}$ for any $i\geq 1$ such that $i$ is not divisible by $p$ and $(i,\frac{(p-1)(q-1)}{2})=1$. \end{corollary}

\begin{proof} For any $i\geq 1$, since $M$ can be changed to the trivial pattern by changing a single positive crossing, $M^i(K)$ can be changed to $K$ by changing $i$ positive crossings to negative crossings. By \cite{Lev14}, $\tau(M^i(K))=\tau(K)+i$, since $\tau(K)=\frac{(p-1)(q-1)}{2}>0$\cite{OzSz03}.

The first singularity of $\Upsilon_K$ occurs at $t_0=2/p<1$; see e.g.~\cite{Wang15}.  Let $\alpha$ denote the slope of $\Upsilon_K$ at $t_0+\epsilon$. {Since $\Upsilon_K$ is convex for torus knots by~\cite[Theorem~1.15]{OzStipSz14}}, we must have that $\lvert \alpha \rvert  < \tau(K)$, and therefore, $\alpha \neq n\tau(K)$ for any positive integer $n$.  Note that $\frac{2}{p}\cdot i$ is not an even integer exactly when $i$ is not divisible by $p$. Lastly, by hypothesis, $(i,\tau(K))=1$, since $\tau(K)=\frac{(p-1)(q-1)}{2}$.\end{proof}

From the above, we see that $\{T_{p,q},M(T_{p,q})\}$ is linearly independent for all $p,q$ with $3\leq p<q$. As a further example, $\{T_{3,4},M^i(T_{3,4})\}$ is linearly independent whenever $i$ is not a multiple of 3. We may also apply Proposition~\ref{prop:KandP(K)linindep} to families of topologically slice knots. Let $(\text{Wh}^+(T_{2,3}))_{p,q}$ denote the $(p,q)$--cable of the positive clasped untwisted Whitehead double of the right-handed trefoil $T_{2,3}$. For $n\geq 0$, let
$$K_n\coloneqq(\text{Wh}^+(T_{2,3}))_{n+2,2n+3} \# -T_{n+2,2n+3}.$$
In~\cite[Theorem~1.20]{OzStipSz14}, Ozsv\'ath, Stipsicz, Szab\'o showed that these knots 
generate an infinite rank summand of $\mathcal{T}$.
Here it is easy to see that each $K_n$ is topologically slice, since $\text{Wh}^+(T_{2,3})$ is topologically slice  and therefore, $(\text{Wh}^+(T_{2,3}))_{n+2,2n+3}$ is topologically concordant to $T_{n+2,2n+3}$. Consequently, $M^r(K_n)$ is topologically slice for all $n\geq 0$ and $r\geq 1$.

\begin{corollary}\label{cor:topsliceKandP(K)linindep} For each $n\geq 0$, $\{K_n,M(K_n)\}$ is linearly independent in $\mathcal{T}$. \end{corollary}

We give the proof after a lemma, collecting several facts from~\cite{Lev14,OzStipSz14}.

\begin{lemma}\label{lem:Knprops}For the knots $K_n$, $n\geq 0$ defined above, let $t_n$ denote the first singularity of $\Upsilon_{K_n}$, and $\alpha_n$ the slope of $\Upsilon_{K_n}$ at $t_n+\epsilon$. Then,
\begin{enumerate}
\item $\tau(K_n)=n+2$
\item $\tau(M^r(K_n))=\tau(K_n)+r$ for any $r\geq 0$.
\item $t_n=\frac{2}{2n+3}$
\item $\alpha_n=n+1$
\end{enumerate}
\end{lemma}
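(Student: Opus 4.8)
The plan is to verify each of the four items by combining known $\tau$-computations for cables and Whitehead doubles with the $\Upsilon$-computation of torus-knot cables of $\mathrm{Wh}^+(T_{2,3})$ carried out in~\cite{OzStipSz14}, and the behaviour of $\tau$ under the Mazur operator from~\cite{Lev14}. First, for item (1), recall that $K_n = (\mathrm{Wh}^+(T_{2,3}))_{n+2,2n+3}\# -T_{n+2,2n+3}$. Since $\mathrm{Wh}^+(T_{2,3})$ is topologically slice but has $\tau(\mathrm{Wh}^+(T_{2,3}))=1$ by Proposition~\ref{prop:whiteheaddoubles} (or by~\cite{He07}), I would apply Hedden's cabling formula for $\tau$ (for a knot $J$ with $\tau(J)=g_4(J)$, one has $\tau(J_{p,q}) = p\tau(J) + \tfrac{(p-1)(q-1)}{2}$, which applies here with $g_4(\mathrm{Wh}^+(T_{2,3}))=1$) to get $\tau((\mathrm{Wh}^+(T_{2,3}))_{n+2,2n+3}) = (n+2) + \tfrac{(n+1)(2n+2)}{2} = (n+2)+(n+1)^2$. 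Since $\tau(T_{n+2,2n+3}) = \tfrac{(n+1)(2n+2)}{2} = (n+1)^2$ by~\cite{OzSz03}, additivity of $\tau$ (Proposition~\ref{prop:upsilon_properties}(1), applied to $\tau$ which is the negative of the $t=0$ slope) gives $\tau(K_n) = (n+2)+(n+1)^2 - (n+1)^2 = n+2$, as claimed.

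For item (2), I would use that the Mazur pattern $M$ has winding number one and satisfies $\tau(M(J)) = \tau(J)+1$ for every knot $J$ with $\tau(J)>0$ by~\cite{Lev14}; iterating, $\tau(M^r(K_n)) = \tau(K_n)+r$ provided $\tau(M^{j}(K_n))>0$ for all $0\le j<r$, which is automatic since $\tau(K_n)=n+2>0$ and each step increases $\tau$. For items (3) and (4), the input is the explicit partial computation of $\Upsilon$ in~\cite{OzStipSz14}: there it is shown (in the course of proving that the $K_n$ generate a free summand of $\mathcal{T}$) that $\Upsilon_{K_n}$ has its first singularity at $t_n = \tfrac{2}{2n+3}$ with the slope jumping there by a controlled amount. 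Concretely, since $\tau(K_n)=n+2$ the slope of $\Upsilon_{K_n}$ at $0$ is $-(n+2)$, and the analysis in~\cite{OzStipSz14} shows the slope on $(t_n, \cdot)$ equals $\alpha_n = n+1$ — i.e.\ the jump at $t_n$ is $2n+3 = \tfrac{2\cdot 1}{\,2/(2n+3)\,}$, consistent with Proposition~\ref{prop:upsilon_properties}(6) since $t_n\cdot(2n+3) = 2$. I would simply quote these facts, citing the relevant lemma/computation in~\cite{OzStipSz14} and noting that $\gcd(2, 2n+3)=1$ so $t_n = \tfrac{p_n}{q_n}$ in lowest terms with $p_n=2$, $q_n = 2n+3$.

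The main obstacle is bookkeeping rather than conceptual: I must make sure the cabling formula for $\tau$ is applied in a regime where it is valid (the "$\tau = g_4$" hypothesis for $\mathrm{Wh}^+(T_{2,3})$, which holds since $\tau = 1$ and $g_4 \le g(\mathrm{Wh}^+) = 1$), and I must extract items (3) and (4) accurately from~\cite{OzStipSz14}, where the computation of $\Upsilon_{K_n}$ is performed for a specific purpose and the value $\alpha_n$ may not be stated in exactly this form. If the slope $\alpha_n$ is not explicitly recorded there, I would recover it from the constraints: the slope at $0$ is $-(n+2)$, the first singularity is at $t_n = 2/(2n+3)$, and the jump $\Delta\Upsilon'_{K_n}(t_n)$ must satisfy $t_n\cdot \Delta\Upsilon'_{K_n}(t_n) \in 2\Z$ by Proposition~\ref{prop:upsilon_properties}(6), forcing $\Delta\Upsilon'_{K_n}(t_n) \in (2n+3)\Z$; combined with a lower bound on $\Upsilon_{K_n}$ near $0$ coming from $g_c$ or $g_4$ of $K_n$, this pins down the jump to be exactly $2n+3$ and hence $\alpha_n = -(n+2)+(2n+3) = n+1$.
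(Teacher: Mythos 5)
Your proposal is correct and follows essentially the same route as the paper: additivity of $\tau$ together with the cabling formula (valid since $\tau(\Wh^+(T_{2,3}))=g(\Wh^+(T_{2,3}))=1$ forces $\varepsilon=1$), Levine's result for the Mazur pattern, and quoting the explicit $\Upsilon$-computations of \cite{OzStipSz14} for $(\Wh^+(T_{2,3}))_{n+2,2n+3}$ and $T_{n+2,2n+3}$ to locate the first singularity and the slopes on either side. One small caveat: your fallback argument for recovering $\alpha_n$ from Proposition~\ref{prop:upsilon_properties}(6) plus genus bounds would not actually close, since $g_4(K_n)$ and $g_c(K_n)$ grow quadratically in $n$ and so do not rule out a jump of $2(2n+3)$; fortunately the slopes are recorded explicitly in \cite{OzStipSz14}, which is what the paper uses.
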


\begin{proof} By the properties of $\tau$ given in~\cite{OzSz03}, $$\tau(K_n)=\tau((\text{Wh}^+(T_{2,3}))_{n+2,2n+3}) -\tau(T_{n+2,2n+3})$$
We also know from~\cite{OzSz03} that $\tau(T_{n+2,2n+3})=\frac{(n+2-1)(2n+3-1)}{2}=\frac{(n+1)(2n+2)}{2}=(n+1)^2$. We know from~\cite[Theorem 1.4]{He07} that $\tau(\text{Wh}^+(T_{2,3}))=1$, since $\tau(T_{2,3})>0$. Note that
$$\tau(\text{Wh}^+(T_{2,3}))=g(\text{Wh}^+(T_{2,3}))=1$$
and so by~\cite[Proposition 3.6]{Hom14}, we see that $\varepsilon(\text{Wh}^+(T_{2,3}))=\text{sgn}(\tau(\text{Wh}^+(T_{2,3}))=1$. Then by~\cite[Theorem 1]{Hom14},
$$\tau((\text{Wh}^+(T_{2,3}))_{n+2,2n+3})=(n+2)\cdot\tau(\text{Wh}^+(T_{2,3}))+\frac{(n+2-1)(2n+3-1)}{2}=(n+2) + (n+1)^2$$
As a result, $\tau(K_n)=n+2>0$.

By~\cite[Corollary 1.6]{Lev14}, part (2) follows since $\tau(K_n)>0$. For part (3), we know from \cite[Lemma 8.7]{OzStipSz14} that the first singularity of $\Upsilon_{(\text{Wh}^+(T_{2,3}))_{n+2,2n+3}}$ occurs at $\frac{2}{2n+3}$ and \cite[Lemma 8.12]{OzStipSz14} shows that $\Upsilon_{T_{n+2,2n+3}}$ has no singularities in $(0,\frac{2}{n+2})$. Since $\frac{2}{2n+3}<\frac{2}{n+2}$ for any $n\geq 0$, we see that the first singularity of $\Upsilon_{K_n}$ occurs at $\frac{2}{2n+3}$.

From~\cite[Theorem 1.20]{OzStipSz14}, we know that the slope of $\Upsilon_{(\text{Wh}^+(T_{2,3}))_{n+2,2n+3}}$ at $\frac{2}{2n+3}+\epsilon$ is $-(n^2+n)$, and since the slope of $\Upsilon_{T_{n+2,2n+3}}$ at $\frac{2}{2n+3}+\epsilon$ is $-(n+1)^2$, we see that $\alpha_n=n+1$. \end{proof}

\begin{proof}[Proof of Corollary~\ref{cor:topsliceKandP(K)linindep}]
From Lemma~\ref{lem:Knprops}, using our previous notation, we see that $\tau(K_n)>0$, $t_n<1$, and $\alpha_n$ is not an positive integer multiple of $\tau(K_n)$, for each $n\geq 0$. Thus we can apply Corollary~\ref{cor:KandM(K)linindep}. \end{proof}

We can also use the $\Upsilon$--invariant to detect that winding number one satellite operators preserve linear independence of certain families of knots.

\begin{proposition}\label{prop:preserves_linindep}Let $\{J_n\}_{n=0}^\infty$ be a sequence of knots with arbitrarily small first singularity of $\Upsilon_{J_n}$; that is, the first singularity of $\Upsilon_{J_n}$ occurs at $t_n$ such that $\lim_{n\rightarrow \infty} t_n =0$. Assume that the change of slope at $t_n$ is positive.
Let $P$ be a pattern which can be changed to the trivial pattern by changing $r$ positive crossings to negative crossings and for which $\tau(P(J_n))=\tau(J_n)+r$ for each $n$. Then there is a subsequence of knots $\{J_{n_l}\}_{n_l=0}^\infty$ such that $\{P(J_{n_l})\}_{n_l=0}^\infty$ is linearly independent in $\mathcal{C}$.\end{proposition}

\begin{proof}First observe that, for any $n\geq 0$, $\Upsilon_{P(J_n)}$ has its first singularity in $(0,t_n]$, as follows. Assume towards a contradiction that $\Upsilon_{P(J_n)}$ has no singularity in $\left(0,t_n\right]$.
{This means that there exists a $\overline{t}$ such that for all $t$ with $1>\overline{t}\geq t>t_n$ we have
$$\Upsilon_{P(J_n)}(t)=-\tau(P(J_n))\cdot t=-\tau(J_n)\cdot t -r\cdot t.$$
Therefore, we have
\[\Upsilon_{J_n}(t)-\Upsilon_{P(J_n)}(t)>-t\cdot\tau(J_n)-(-\tau(J_n)\cdot t -r\cdot t)=r\cdot t\]
for $t>t_n$ close to $t_n$, where the fact that the slope change at $t_n$ is positive is used for the first inequality.}

However, note that since $P$ can be changed to the trivial pattern by changing $r$ positive crossings to negative crossings, from Proposition~\ref{prop:upsilon_properties}(8), we know that $|\Upsilon_{J_n}(t)-\Upsilon_{P(J_n)}(t)|\leq r\cdot t$, contradicting the previous statement.

Set $J_{n_0}=J_0$ and choose $J_{n_l}$ inductively:  assume $J_{n_0},J_{n_1}\cdots J_{n_l}$ have been chosen such that the first singularity of $\Upsilon_{P(J_{n_i})}$ and $\Upsilon_{P(J_{n_j})}$ are different for $i\neq j$. We set $J_{n_{l+1}}=J_n$ such that $t_n$ is strictly smaller than all of the first singularities of $\Upsilon_{P(J_{n_i})}$ for $i\leq l$. By Lemma~\ref{lem:linindep}, this yields a sequence of knots $J_{n_l}$ as desired. \end{proof}

There are several infinite families of knots known to be linearly independent in $\mathcal{C}$, e.g.\ for the positive torus knots this is established using Levine--Tristram signatures in~\cite[Theorem~1]{Li79}. Since winding number one patterns $P$ with $P(U)$ slice preserve signatures, we can also use the Levine--Tristram signatures to show that the image of the positive torus knots under the Mazur pattern are linearly independent in $\mathcal{C}$. The advantage of the $\Upsilon$--invariant over the classical signature function is that it can detect linear independence of families of topologically slice knots, for whom the signature function (averaged at the roots of the Alexander polynomial) vanishes. We show an example below.

\begin{corollary}\label{cor:MKnlinindep} For the sequence of linearly independent topologically slice knots $\{K_n\}_{n=0}^\infty$ from Lemma~\ref{lem:Knprops} and any positive integer $r$, there is a subsequence $\{K_{n_l}\}_{n_l=0}^\infty$ such that the set
 $\{M^r(K_{n_l})\}_{n_l=0}^\infty$
  is linearly independent in $\mathcal{T}$. In particular, $\{M^r(K_{n})\}_{n=0}^\infty$ generates an infinite rank subgroup of $\mathcal{T}$ for all non-negative integers $r$.\end{corollary}

\begin{proof}Using the notation from Lemma~\ref{lem:Knprops}, we saw that $\lim_{n\rightarrow \infty} t_n=0$, the change of slope at $t_n$ is positive, and $\tau(M^r(K_n))=\tau(K_n)+r$ for any $n\geq 0$ and $r\geq 1$. Since the iterated pattern $M^r$ can be changed to the trivial pattern by changing $r$ positive crossings to negative crossings, we can apply Proposition~\ref{prop:preserves_linindep}.
\end{proof}

We note that there are other methods for showing the linear independence of topologically slice knots; the first example of a family of linearly independent topologically slice knots was given by Endo in~\cite{En95}.

We have now finished the proof of Theorem~\ref{thm_main}.

\newtheorem*{thm_main}{Theorem~\ref{thm_main}}
\begin{thm_main} There exists an infinite family of topologically slice knots $\{K_n\}_{n=0}^\infty$ such that, for all non-negative integers $r$, $\{M^r(K_n)\}_{n=0}^\infty$ generates an infinite rank subgroup of $\mathcal{T}$.
\end{thm_main}

By combining Theorem~\ref{thm_main} and Corollary~\ref{cor:topsliceKandP(K)linindep}, we can prove Theorem~\ref{cor:cor_yasui}.

\newtheorem*{cor_yasui}{Theorem~\ref{cor:cor_yasui}}
\begin{cor_yasui}
There is an infinite family of pairs of topologically slice knots $\{(L_i, J_i)\}_{i=0}^\infty$ such that the families $\{L_i\}_{i=0}^\infty$ and $\{J_i\}_{i=0}^\infty$ are each linearly independent in $\mathcal{C}$, each pair $\{L_i, J_i\}$ is linearly independent (with either orientation) in $\mathcal{C}$, and the $0$--surgery manifolds for $L_i$ and $J_i$ are homeomorphic for each $i$.
\end{cor_yasui}

\begin{figure}[t]
\centering
\includegraphics[width=2in]{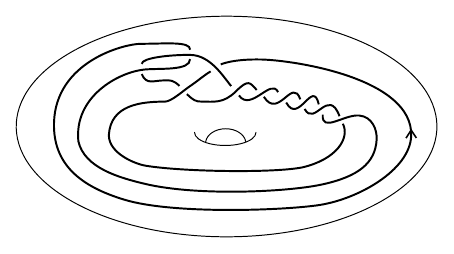}
\caption{The pattern $Q$.}\label{fig:Qpattern}
\end{figure}

\begin{proof}In~\cite[Corollaries 4.12 and 4.13]{Y15}, Yasui showed that for any topologically slice knot $K$ with $\tau(K)>0$, the pair of knots $(M(K),Q(K))$, where $Q$ is the pattern shown in Figure~\ref{fig:Qpattern}, have the same 0--surgery but are not concordant for any orientation. By Corollary~\ref{cor:MKnlinindep}, there is an infinite linearly independent family of topologically slice knots $\{M(K_{n_l})\}_{n_l=0}^\infty$, where $\tau(K_{n_l})>0$. Let $L_i$ denote the knots $M(K_{n_l})$ from Corollary~\ref{cor:MKnlinindep}, and $J_i$ denote the corresponding $Q(K_{n_l})$.

Note that for any knot $K$, $Q(K)$ is concordant to $K$ via cutting a single band. Since each $K_{n_l}$ is topologically slice, so is $Q(K_{n_l})$. Moreover, each pair $\{M(K_{n_l}), Q(K_{n_l})\}$ is linearly independent since $\{M(K_{n_l}), K_{n_l}\}$ is linearly independent by Corollary~\ref{cor:topsliceKandP(K)linindep}. Changing the orientation of the knots does not affect this, since we know from Proposition~\ref{prop:upsilon_properties}(1) that the $\Upsilon$--invariant is invariant under reverses.
\end{proof}

\section{Cable knots}\label{sec:otherwindingnumber}
In this section, $p$ and $q$ are always relatively prime integers with $p>0$. Recall the following formula for signatures of cable knots.

\begin{proposition}[{\cite[Theorem 2]{Li79}}]\label{prop:sigcableformula}Let $K$ be a knot and $\omega$ be a root of unity. Then$$\sigma_\omega(K_{p,q}) = \sigma_{{\omega}^p} (K) + \sigma_\omega(T_{p,q}).$$
\end{proposition}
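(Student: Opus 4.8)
The plan is to prove this via the Seifert-form description of the Levine--Tristram signature, following Litherland. Recall that for a knot $J$ bounding a Seifert surface with Seifert matrix $V$, the signature $\sigma_\omega(J)$ is the signature of the Hermitian matrix $(1-\omega)V+(1-\bar\omega)V^{T}$, and that $K_{p,q}$ is by definition the satellite of $K$ with pattern the $(p,q)$--torus knot sitting in the solid torus $W=S^1\times D^2$ with winding number $p$; under the standard embedding of $W$ in $S^3$ this pattern becomes the torus knot $T_{p,q}$. So the proposition is the winding-number-$p$ instance of the general satellite signature formula $\sigma_\omega(P(K))=\sigma_\omega(P(U))+\sigma_{\omega^{p}}(K)$, and it suffices to establish the latter.

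The first step is to build an efficient Seifert surface $F$ for the satellite $P(K)$ out of two pieces: a Seifert surface $\Sigma_P$ for the pattern inside $W$ chosen to meet a meridian disk of $W$ in $p$ arcs, together with $p$ parallel pushoffs of a Seifert surface $\Sigma_K$ for the companion $K$, glued to the ends of $\Sigma_P$ as one runs $W$ around $K$. This is the standard construction of a Seifert surface for a winding number $p$ satellite; it gives a basis of $H_1(F)$ split into a ``pattern block'', a ``companion block'' consisting of $p$ copies of $H_1(\Sigma_K)$, and finitely many ``mixed'' generators. The second step is to read off the Seifert form $\mathcal V$ of $F$ in this basis: on the pattern block it records the Seifert form of $T_{p,q}$ in $S^3$; on the companion block it is \emph{cyclic} rather than block-diagonal, because travelling once around the core of $W$ cyclically permutes the $p$ pushoffs of $\Sigma_K$; and the mixed generators span a subspace on which $(1-\omega)\mathcal V+(1-\bar\omega)\mathcal V^{T}$ is metabolic, hence contributes zero to the signature. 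The third step is the linear algebra: diagonalizing the $\mathbb{Z}/p$--action on the cyclic companion block replaces it, for a fixed unit complex number $\omega$, by $\bigoplus_{\zeta^{p}=\omega}\big((1-\zeta)V_K+(1-\bar\zeta)V_K^{T}\big)$; a complex-conjugation symmetry pairs all of these summands into hyperbolic (signature-zero) blocks except for a single surviving copy, whose signature is exactly $\sigma_{\omega^{p}}(K)$. Adding the pattern block's contribution $\sigma_\omega(T_{p,q})$ finishes the computation. For $\omega$ a root of unity --- which is all the proposition asserts --- this suffices; the general case follows by continuity of $\sigma_\omega$ away from roots of the Alexander polynomial.

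The main obstacle is precisely the bookkeeping of the cyclic companion block together with the mixed generators: one must set up $F$ and a basis of $H_1(F)$ concretely enough that the $\mathbb{Z}/p$--equivariant structure is manifest, verify carefully that the mixed generators genuinely carry no signature, and handle the $p$-th-root-of-unity combinatorics that collapses $p$ terms down to the single term $\sigma_{\omega^{p}}(K)$ (including the borderline cases where several $p$-th roots $\zeta$ of $\omega$ are fixed by conjugation). An alternative that trades this linear algebra for four-manifold topology: at a root of unity $\omega$, $\sigma_\omega(J)$ is a normalized $G$-signature defect of a cyclic branched cover, and the relevant branched cover of $S^3$ along $K_{p,q}$ decomposes along the incompressible torus $\partial W$ into a piece built from the branched cover along $T_{p,q}$ and a piece built from a cyclic branched cover along $K$; Novikov additivity of signatures of the four-manifolds bounding these then yields the formula, with the exponent $p$ on $\omega$ reflecting the multiplicity $p=$ winding number of the gluing. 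Either route isolates the same phenomenon --- the linking/winding number $p$ appearing as $\omega\mapsto\omega^{p}$ --- as the content of the theorem.
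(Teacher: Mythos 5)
A preliminary remark: the paper does not prove this statement at all --- it is imported verbatim as \cite[Theorem 2]{Li79} --- so there is no in-paper argument to measure yours against. Your overall architecture (a Seifert surface for the winding-number-$p$ satellite assembled from a relative surface for the pattern in the solid torus together with $p$ parallel copies of a Seifert surface for $K$, followed by a block analysis of the symmetrized Seifert form) is essentially the architecture of Litherland's original argument, so the strategy is the right one. The problems are in the two places where your sketch claims to have identified the mechanism.

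First, the $p$ parallel copies of $\Sigma_K$ are \emph{nested} pushoffs: there is no $\mathbb{Z}/p$--action on the Seifert surface cyclically permuting them, and the companion block of the Seifert matrix is not circulant but the ``staircase'' form, with $V_K$ in the $(i,j)$ blocks on one side of the block diagonal and $V_K^{T}$ on the other. (You appear to be conflating this with the genuinely cyclic permutation of the $p$ components of the preimage of the companion exterior in the infinite cyclic cover, which is a different picture; there the single term $\sigma_{\omega^{p}}(K)$ comes out of induction of modules, not out of a sum over roots.) Consequently the decomposition $\bigoplus_{\zeta^{p}=\omega}\bigl((1-\zeta)V_K+(1-\bar\zeta)V_K^{T}\bigr)$ never arises, and the conclusion you draw from it is internally inconsistent anyway: $\omega^{p}$ need not be among the $\zeta$ with $\zeta^{p}=\omega$, and $\sum_{\zeta^{p}=\omega}\sigma_{\zeta}(K)$ is not $\sigma_{\omega^{p}}(K)$ in general (take $K$ the trefoil, $p=2$, $\omega=e^{i\pi/6}$: the sum is $-2$ while $\sigma_{\omega^{2}}(K)=0$), so no single ``surviving summand'' can give the right answer. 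The correct mechanism is an explicit congruence over $\mathbb{C}$ carrying the symmetrized staircase block to $\bigl((1-\omega^{p})V_K+(1-\bar{\omega}^{p})V_K^{T}\bigr)$ plus nonsingular blocks of signature zero; that computation is where the exponent $p$ actually enters, and it is precisely what your symmetry argument replaces incorrectly. Second, ``the mixed generators span a metabolic subspace, hence contribute zero'' needs that subspace to be a \emph{nondegenerate orthogonal summand} before metabolic implies signature zero, and establishing that is exactly the unperformed bookkeeping; the standard treatment chooses the surface and basis so that the pattern and companion pieces are Seifert-orthogonal and the Mayer--Vietoris classes are dealt with explicitly. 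Finally, the closing continuity remark is backwards: $\sigma_\omega$ is discontinuous precisely at zeros of the Alexander polynomial, so restricting to roots of unity does not make the problem easier and density does not propagate the formula.
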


This immediately yields the following, certainly well-known, result for cables of topologically slice knots.

\begin{corollary}\label{cor:cablesofsliceq>1} Let $p,q$ be coprime integers with $|p|,|q|\geq 2$.
If a topologically slice knot $K$ represents a non-torsion element in $\mathcal{C}$, then $\{K,K_{p,q}\}$ are linearly independent.
\end{corollary}

\begin{proof} Recall that the signature $\sigma=\sigma_{-1}$ is a homomorphism from $\C$ to the integers, which is zero on topologically slice knots and non-zero on non-trivial torus knots $T_{p,q}$. If we have $aK+b K_{p,q}=0$ in $\C$, then applying Proposition~\ref{prop:sigcableformula} for $\omega=-1$ yields $b=0$, since $\sigma_{1}(K)=\sigma_{-1}(K)=0$. This completes the proof since $K$ is assumed to be non-torsion in $\mathcal{C}$.\end{proof}

We are interested in differentiating between topologically slice knots $K$ and its satellites. In light of Corollary~\ref{cor:cablesofsliceq>1}, the interesting case for cables is $q=\pm 1$.

In~\cite{Chen:2016-1}, Chen showed that for any knot $K$, 
\begin{equation}\label{eqn:cheninequality}
\Upsilon_K(pt) - \frac{(p - 1)(q + 1)}{2}t \leq \Upsilon_{K_{p,q}} (t) \leq\Upsilon_K(pt) -\frac{(p - 1)(q - 1)}{2}t,
\end{equation}
when $0\leq t\leq \frac{2}{p}$.

%

For any $K$ with $\tau(K)\neq 0$, Chen's inequality allows us to determine an interval for the first singularity as we see below.

\begin{proposition}\label{prop:cablefirstsingeasy}For any knot $K$ with $\tau(K)\neq 0$, the first singularity of $\Upsilon_{K_{p,q}}$ is in $\left(0,\frac{2}{p}\right]$.\end{proposition}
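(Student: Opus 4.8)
The plan is to use Chen's inequality~\eqref{eqn:cheninequality} to pin down the value and slope of $\Upsilon_{K_{p,q}}$ on the initial interval $[0,\tfrac{2}{p}]$, and then argue that the graph of $\Upsilon_{K_{p,q}}$ cannot be linear all the way to $\tfrac2p$ unless it has a singularity strictly before. First I would recall that, by Proposition~\ref{prop:upsilon_properties}(4) and (5), near $t=0$ we have $\Upsilon_K(t)=-\tau(K)\,t$, so $\Upsilon_K(pt)=-\tau(K)\,pt$ near $0$; plugging this into~\eqref{eqn:cheninequality} shows that for small $t$,
\[
-\Bigl(p\tau(K)+\tfrac{(p-1)(q+1)}{2}\Bigr)t \;\le\; \Upsilon_{K_{p,q}}(t)\;\le\; -\Bigl(p\tau(K)+\tfrac{(p-1)(q-1)}{2}\Bigr)t,
\]
which (combined with Proposition~\ref{prop:upsilon_properties}(5) and the fact that the slope at $0$ is $-\tau(K_{p,q})$, an integer) forces $-\tau(K_{p,q})$ to lie between these two slopes. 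The key point is that both bounding functions agree with $\Upsilon_K(pt)$ up to a linear correction, so the \emph{only} candidate for where $\Upsilon_{K_{p,q}}$ can first deviate from its initial line is governed by where $\Upsilon_K(pt)$ itself bends, i.e.\ at $t=\tfrac2p$ at the latest.

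Next I would argue by contradiction: suppose $\Upsilon_{K_{p,q}}$ has no singularity in $(0,\tfrac2p)$, so that $\Upsilon_{K_{p,q}}(t)=-\tau(K_{p,q})\,t$ for all $t\in[0,\tfrac2p]$. Evaluate~\eqref{eqn:cheninequality} at $t=\tfrac2p$. By Proposition~\ref{prop:upsilon_properties}(2) (symmetry), $\Upsilon_K(2)=\Upsilon_K(0)=0$, so at $t=\tfrac2p$ the inequality reads
\[
-\frac{(p-1)(q+1)}{p} \;\le\; \Upsilon_{K_{p,q}}\!\Bigl(\tfrac2p\Bigr)\;=\;-\frac{2\tau(K_{p,q})}{p}\;\le\; -\frac{(p-1)(q-1)}{p}.
\]
Multiplying through by $-p$ gives $(p-1)(q-1)\le 2\tau(K_{p,q})\le (p-1)(q+1)$. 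On the other hand, the initial-slope computation above, evaluated at $t\to 0^+$, already told us $p\tau(K)+\tfrac{(p-1)(q-1)}{2}\le \tau(K_{p,q})\le p\tau(K)+\tfrac{(p-1)(q+1)}{2}$. Comparing the two sandwiches forces $p\tau(K)$ into the interval $[-\tfrac{(p-1)}{2}(q+1)+\tfrac{(p-1)(q-1)}{2},\,\ldots]=[-(p-1),\,(p-1)]$ after simplification, i.e.\ $|p\tau(K)|\le p-1<p$, which contradicts $\tau(K)\neq 0$ (an integer, so $|\tau(K)|\ge1$ forces $|p\tau(K)|\ge p$). Hence $\Upsilon_{K_{p,q}}$ must have a singularity in $(0,\tfrac2p)$; together with the possibility of a singularity exactly at $\tfrac2p$, this gives the interval $(0,\tfrac2p]$ asserted.

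I expect the main obstacle to be bookkeeping the two sandwich inequalities carefully enough to extract the clean contradiction $|p\tau(K)|\le p-1$: one has to be attentive that the "initial slope" bound is a statement about $\lim_{t\to0^+}\Upsilon'_{K_{p,q}}$ (equivalently about $\tau(K_{p,q})=-\Upsilon'_{K_{p,q}}(0)$ via Proposition~\ref{prop:upsilon_properties}(5)), while the "endpoint" bound at $t=\tfrac2p$ is a statement about the value $\Upsilon_{K_{p,q}}(\tfrac2p)$, and these only become comparable once one assumes linearity on the whole interval. A cleaner alternative, which I would also consider, is to bypass $\tau(K_{p,q})$ entirely: if $\Upsilon_{K_{p,q}}$ were linear on $[0,\tfrac2p]$ its value at $\tfrac2p$ would be $\tfrac2p$ times its slope at $0$, and one feeds \emph{that} single quantity into the $t=\tfrac2p$ instance of~\eqref{eqn:cheninequality} while feeding the slope at $0$ into the $t\to0^+$ instance, again landing on $|p\tau(K)|<p$. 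Either way the integrality of $\tau(K)$ and the hypothesis $\tau(K)\neq0$ close the argument.
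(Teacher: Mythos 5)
Your argument is correct, and it reaches the contradiction by a route that differs from the paper's in one substantive way. The paper also assumes $\Upsilon_{K_{p,q}}$ is linear on $\left[0,\tfrac{2}{p}\right]$ and evaluates Chen's inequality~(\ref{eqn:cheninequality}) at $t=\tfrac{2}{p}$ using $\Upsilon_K(2)=0$, but it pins down $\tau(K_{p,q})$ via Hom's cabling formula (Proposition~\ref{prop:taucableformula}), which forces a case split on $\varepsilon(K)=\pm 1$ and a separate mirror-image reduction for $\tau(K)<0$. You instead extract the needed control on $\tau(K_{p,q})$ from a second application of~(\ref{eqn:cheninequality}) in the limit $t\to 0^+$, obtaining
\[
p\tau(K)+\tfrac{(p-1)(q-1)}{2}\;\le\;\tau(K_{p,q})\;\le\;p\tau(K)+\tfrac{(p-1)(q+1)}{2},
\]
and intersecting this with the endpoint sandwich $\tfrac{(p-1)(q-1)}{2}\le\tau(K_{p,q})\le\tfrac{(p-1)(q+1)}{2}$ to get $\lvert p\tau(K)\rvert\le p-1$, contradicting $\lvert\tau(K)\rvert\ge 1$. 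This is cleaner: it removes the dependence on the $\varepsilon$--invariant and on Proposition~\ref{prop:taucableformula}, handles both signs of $\tau(K)$ and all values of $\varepsilon(K)$ uniformly, and in fact yields the slightly stronger conclusion that the first singularity lies in the open interval $\left(0,\tfrac{2}{p}\right)$, since your contradiction hypothesis only excludes singularities in $(0,\tfrac{2}{p})$ and the value at $\tfrac{2}{p}$ is then still determined by continuity. The one point to state explicitly in a final write-up is that the $t\to 0^+$ sandwich holds unconditionally while the $t=\tfrac{2}{p}$ sandwich uses the linearity assumption; you flag this yourself, and the bookkeeping as written checks out.
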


\begin{proof}
First we consider the case when $\tau(K)>0$. Suppose that $\Upsilon_{K_{p,q}}$ does not have a singularity in $\left(0,\frac{2}{p}\right]$, i.e.\ $\Upsilon_{K_{p,q}}(t)=-\tau(K_{p,q})\cdot t$ on $\left(0,\frac{2}{p}\right]$. Since $\tau(K)>0$, $\varepsilon(K)\neq 0$. If $\varepsilon(K)=1$, we know that $\tau(K_{p,q}) = p\tau(K) + \frac{(p-1)(q-1)}{2}$ from \cite[Theorem 1]{Hom14}, and thus
$$\Upsilon_{K_{p,q}}(t)=-\left(p\tau(K) + \frac{(p-1)(q-1)}{2}\right)\cdot t$$
for any $0\leq t\leq \frac{2}{p}$. By Chen's inequality~(\ref{eqn:cheninequality}), for $0\leq t\leq \frac{2}{p}$,
$$\Upsilon_K(pt)-\frac{(p-1)(q+1)}{2}t\leq -\left(p\tau(K)+\frac{(p-1)(q-1)}{2}\right)\cdot t.$$
By Proposition~\ref{prop:upsilon_properties}(2) and (3), we know that $\Upsilon_K(2)=0$, and thus, at $t=\frac{2}{p}$,
$$\frac{(p-1)(q+1)}{p}\geq 2\tau(K)+\frac{(p-1)(q-1)}{p}$$
and thus,
$$(p-1)\geq p\tau(K)$$
which is a contradiction, since $\tau(K)$ is a positive integer.

If $\varepsilon(K)=-1$, we know that $\tau(K_{p,q}) = p\tau(K) + \frac{(p-1)(q+1)}{2}$ from \cite[Theorem 1]{Hom14}, and thus
$$\Upsilon_{K_{p,q}}(t)=-\left(p\tau(K) + \frac{(p-1)(q+1)}{2}\right)\cdot t$$
for any $0\leq t\leq \frac{2}{p}$. By Chen's inequality~(\ref{eqn:cheninequality}), for $0 \leq t\leq \frac{2}{p}$,
$$\Upsilon_K(pt)-\frac{(p-1)(q+1)}{2}t \leq -\left(p\tau(K)+\frac{(p-1)(q+1)}{2}\right)\cdot t.$$
Thus at $t=\frac{2}{p}$,
$$\frac{(p-1)(q+1)}{p}\geq 2\tau(K)+\frac{(p-1)(q+1)}{p}$$
and thus,
$$0\geq 2\tau(K)$$
which is a contradiction, since $\tau(K)>0$.

For the case when $\tau(K)<0$, notice that $\Upsilon_{K_{p,q}}(t)=-\Upsilon_{-(K_{p,q})}(t)=-\Upsilon_{(-K)_{p,-q}}(t)$. Since $\tau(-K)>0$, the result follows from the previous case.
\end{proof}

For certain knots, we can improve the above result as follows.

\begin{proposition}\label{prop:cablefirstsingugly}For any knot $K$ with $\tau(K)\neq 0$, if the first singularity of $\Upsilon_K$ is located in $\left(1-\frac{p-1}{p(2\lvert\tau(K)\rvert-1)+1},1\right]$, then the first singularity of $\Upsilon_{K_{p,q}}$ is in $\left(0,\frac{2\lvert\tau(K)\rvert}{p(2\lvert\tau(K)\rvert-1)+1}\right]$.\end{proposition}

We get the following immediate corollaries; in the second, we use the fact that $\frac{2\lvert\tau(K)\rvert}{p(2\lvert\tau(K)\rvert-1)+1}<\frac{2}{p}$.

\begin{corollary}If $K$ is a knot with $\tau(K)=\pm1$ for which $\Upsilon$ has exactly one singularity, then the first singularity of $\Upsilon_{K_{p,q}}$ is in $\left(0,\frac{2}{p+1}\right]$. \end{corollary}

\begin{corollary}If $K$ is a knot for which $\Upsilon$ has exactly one singularity, then $\Upsilon_{K_{p,q}}$ has a singularity in $\left(0,\frac{2}{p}\right)$.\end{corollary}

\begin{proof}[Proof of Proposition~\ref{prop:cablefirstsingugly}]The proof is quite similar to the proof for Proposition~\ref{prop:cablefirstsingeasy}. First we consider the case $\tau(K)>0$. By hypothesis, we know that the first singularity of $\Upsilon_K$ occurs in $\left(\frac{2p(\tau(K)-1)+2}{2p\tau(K)-(p-1)},1\right]$, i.e.\ there is no singularity in $\left(0,\frac{2p(\tau(K)-1)+2}{2p\tau(K)-(p-1)}\right]$. By Proposition~\ref{prop:upsilon_properties}(3), there is no singularity in $\left[2-\frac{2p(\tau(K)-1)+2}{2p\tau(K)-(p-1)},2\right)=\left[\frac{2p\tau(K)}{2p\tau(K)-(p-1)},2\right)$. (Note that $\frac{2p\tau(K)}{2p\tau(K)-(p-1)}>1$ since $p\geq 2$). Thus, by Proposition~\ref{prop:upsilon_properties}(3) and (5), for $t\in \left[\frac{2p\tau(K)}{2p\tau(K)-(p-1)},2\right]$,
\begin{equation}\label{eqn:aftersing}
\Upsilon_{K}(t)=\tau(K)(-2+t).
\end{equation}
Suppose that $\Upsilon_{K_{p,q}}$ does not have a singularity in $\left(0,\frac{2\tau(K)}{2p\tau(K)-(p-1)}\right]$, i.e.\ $\Upsilon_{K_{p,q}}(t)=-\tau(K_{p,q})\cdot t$ on $\left(0,\frac{2\tau(K)}{2p\tau(K)-(p-1)}+\epsilon\right)$ for some $\epsilon >0$.
Since $\tau(K)>0$, $\varepsilon(K)\neq 0$. If $\varepsilon(K)=1$, we know that $\tau(K_{p,q}) = p\tau(K) + \frac{(p-1)(q-1)}{2}$ from \cite[Theorem 1]{Hom14}, and thus
$$\Upsilon_{K_{p,q}}(t)=-\left(p\tau(K) + \frac{(p-1)(q-1)}{2}\right)\cdot t$$
for any $0\leq t\leq \frac{2\tau(K)}{2p\tau(K)-(p-1)} + \epsilon$.
By Chen's inequality~(\ref{eqn:cheninequality}), for $\frac{1}{p}\leq t\leq \frac{2\tau(K)}{2p\tau(K)-(p-1)} + \epsilon\leq \frac{2}{p}$,
$$\Upsilon_K(pt)-\frac{(p-1)(q+1)}{2}t \leq -\left(p\tau(K)+\frac{(p-1)(q-1)}{2}\right)\cdot t.$$
For $t=\frac{2\tau(K)}{2p\tau(K)-(p-1)} + \epsilon$, using~(\ref{eqn:aftersing}), we see that
$$\tau(K)(-2+pt)-\frac{(p-1)(q+1)}{2}t\leq -p\tau(K)\cdot t-\frac{(p-1)(q-1)}{2}t$$
and thus,
$$t\leq \frac{2\tau(K)}{2p\tau(K)-(p-1)}$$
which is a contradiction, since $t=\frac{2\tau(K)}{2p\tau(K)-(p-1)} + \epsilon$ and $\epsilon >0$.

If $\varepsilon(K)=-1$, we know that $\tau(K_{p,q}) = p\tau(K) + \frac{(p-1)(q+1)}{2}$ from \cite[Theorem 1]{Hom14}, and thus
$$\Upsilon_{K_{p,q}}(t)=-\left(p\tau(K) + \frac{(p-1)(q+1)}{2}\right)\cdot t$$
for any $0\leq t\leq \frac{2\tau(K)}{2p\tau(K)-(p-1)}+\epsilon$. By Chen's inequality~(\ref{eqn:cheninequality}), for $\frac{1}{p}\leq t\leq \frac{2\tau(K)}{2p\tau(K)-(p-1)}+\epsilon\leq \frac{2}{p}$,
$$\Upsilon_K(pt)-\frac{(p-1)(q+1)}{2}t\leq -\left(p\tau(K)+\frac{(p-1)(q+1)}{2}\right)\cdot t.$$
For $t=\frac{2\tau(K)}{2p\tau(K)-(p-1)} + \epsilon$, using~(\ref{eqn:aftersing}), we see that
$$\tau(K)(-2+pt)-\frac{(p-1)(q+1)}{2}t\leq -p\tau(K)\cdot t-\frac{(p-1)(q+1)}{2}t$$
and thus,
$$t\leq\frac{1}{p}$$
which is a contradiction, since $t=\frac{2\tau(K)}{2p\tau(K)-(p-1)} + \epsilon > \frac{2\tau(K)}{2p\tau(K)-(p-1)} > \frac{1}{p}$, since $\epsilon >0$ and $p\geq 2$.

For the case when $\tau(K)<0$, notice that $\Upsilon_{K_{p,q}}(t)=-\Upsilon_{-(K_{p,q})}(t)=-\Upsilon_{(-K)_{p,-q}}(t)$. Since $\tau(-K)>0$, the result follows from the previous case. \end{proof}

For certain families of knots for which $\Upsilon$ has exactly one singularity, we can further narrow down the location of the first singularity for $(p,1)$ cables, compared to Proposition~\ref{prop:cablefirstsingugly}, as we see below.

\begin{proposition}\label{prop:cablefirstsingbetter}Suppose $K$ is a knot with $g_4(K)=\tau(K)>0$ and for which $\Upsilon$ has exactly one singularity. Then
$\Upsilon_{K_{p,1}}$ has its first singularity in $\left[\frac{1}{p},\frac{2\tau(K)}{2p(\tau(K)-1)+1)}\right]$, for any $p\geq 2$.\end{proposition}

\begin{proof} By 
Proposition~\ref{prop:cablefirstsingugly} it will be enough to show that $\Upsilon_{K_{p,1}}$ does not have a singularity in $\left(0,\frac{1}{p}\right)$. Note that by \cite[Corollary $4$]{Hom14} we have $\varepsilon(K)=1$, and hence $\tau(K_{p,1}) = p\tau(K)$ from \cite[Theorem 1]{Hom14}. For any surface $\Sigma$ in $B^4$ bounded by $K$, we can use $p$ parallel copies band-summed together to get a surface with genus $pg(\Sigma)$ bounded by $K_{p,1}$. Thus, $g_4(K_{p,1})\leq pg_4(K)$. Thus, since $p\tau(K)=\tau(K_{p,1}) \leq g_4(K_{p,1}) \leq pg_4(K)$, we can conclude that $g_4(K_{p,1})= \tau(K_{p,1})=p\tau(K) $.

By Chen's inequality~(\ref{eqn:cheninequality}) for $0\leq t\leq \frac{1}{p}$,
$$\Upsilon_{K_{p,1}}(t) \leq \Upsilon_K(pt)=-p\tau(K)\cdot t.$$
On the other hand by Proposition~\ref{prop:upsilon_properties}(8),
$$-g_4(K_{p,1})\cdot t = -p\tau(K) \cdot t \leq \Upsilon_{K_{p,1}}(t).$$
Thus, for $0\leq t\leq \frac{1}{p}$, $\Upsilon_{K_{p,1}}(t) =-p\tau(K)\cdot t$ which concludes the proof.
\end{proof}


Using the above results, we obtain several corollaries about the linear independence of a knot and its cables. We list several below.

\begin{corollary}\label{cor:cablesindependent}If $K$ is 
a knot for which $\Upsilon$ has exactly one singularity
, then $\{K,K_{p,q}\}$ is linearly independent in $\mathcal{C}$ for any $p,q$ with $p\geq 2$. \end{corollary}

\begin{proof}This follows directly from Lemma~\ref{lem:linindep} and 
Proposition~\ref{prop:cablefirstsingugly}, since $\frac{2\lvert\tau(K)\rvert}{p(2 \lvert\tau(K)\rvert-1)+1}<1$ for any $p\geq 2$ and $\tau(K)\neq 0$. 
\end{proof}

We saw in Section~\ref{sec:windingnumberszero} that there are several families of knots for which $\Upsilon$ has exactly one singularity, including topologically slice knots such as Whitehead doubles~(Proposition~\ref{prop:whiteheaddoubles}) and generalized Whitehead doubles~(Corollary~\ref{cor:genwhiteheaddoubles}). 
For the specific case of Whitehead doubles, we have the following corollary.

\begin{corollary}\label{cor:whiteheadp,pm1cables}Let $\Wh^+(K)$ denote the positive clasped untwisted Whitehead double of a knot $K$ with $\tau(K)>0$. Then the pair $\{\Wh^+(K),\Wh^+(K)_{p,q}\}$ is linearly independent in $\mathcal{T}$ for any $p\geq 2$.\end{corollary}

\begin{remark}Weaker versions of Corollary~\ref{cor:cablesindependent} can be obtained using simple geometric arguments and Proposition~\ref{prop:upsilon_properties}(9). This calculation can be seen in the first version of this paper on the Arxiv, which was written before Chen's inequality was announced.\end{remark}

%

\begin{corollary}\label{cor:Nsmallestbound}Let $K$ be a knot with $\tau(K)\neq 0$ such that the first singularity of $\Upsilon_K$ occurs at $t_0$. Then $K$ and $K_{p,q}$ are linearly independent in $\mathcal{C}$ for any $p> 2/t_0$.\end{corollary}

\begin{proof}By Proposition~\ref{prop:cablefirstsingeasy}, the first singularity of $\Upsilon_{K_{p,q}}$ occurs in $\left(0,\frac{2}{p}\right]$. Since $p>\frac{2}{t_0}$, we see that $\frac{2}{p} < t_0$, and thus we can use Lemma~\ref{lem:linindep}.\end{proof}

\begin{corollary}If $K$ is a knot with $\tau(K)\neq 0$ then for any $p>2g_c(K)$, $K$ and $K_{p,q}$ are linearly independent in $\mathcal{C}$.\end{corollary}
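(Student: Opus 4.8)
The plan is to combine the lower bound on the location of the singularities of $\Upsilon_K$ coming from the concordance genus (Observation~\ref{obs:concgenusbound}) with the upper bound on the location of the first singularity of $\Upsilon_{K_{p,q}}$ (Proposition~\ref{prop:cablefirstsingeasy}), and then feed the comparison into the linear independence criterion Lemma~\ref{lem:linindep}.

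First I would observe that $\tau(K)\neq 0$ forces $g_c(K)\geq\lvert\tau(K)\rvert\geq 1$, so $g_c(K)>0$ and Observation~\ref{obs:concgenusbound} applies: every singularity of $\Upsilon_K$ lies in $\left[\frac{1}{g_c(K)},\frac{2g_c(K)-1}{g_c(K)}\right]$, so in particular the first singularity $t_0$ of $\Upsilon_K$ satisfies $t_0\geq\frac{1}{g_c(K)}$. Such a singularity exists, since $\Upsilon_K(0)=0=\Upsilon_K(2)$ by Proposition~\ref{prop:upsilon_properties}(2),(4) while the slope at $0$ is $-\tau(K)\neq 0$ by Proposition~\ref{prop:upsilon_properties}(5), so $\Upsilon_K$ cannot be linear on $[0,2]$.

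Next, since $p>2g_c(K)\geq 2$, Proposition~\ref{prop:cablefirstsingeasy} guarantees that $\Upsilon_{K_{p,q}}$ has a singularity, with its first one, say $s_0$, satisfying $s_0\leq\frac{2}{p}$. Combining the two bounds gives $s_0\leq\frac{2}{p}<\frac{2}{2g_c(K)}=\frac{1}{g_c(K)}\leq t_0$, so $s_0$ is strictly smaller than every singularity of $\Upsilon_K$; in particular $s_0$ is not a singularity of $\Upsilon_K$. Now apply Lemma~\ref{lem:linindep} to the two-element family $K_0:=K$, $K_1:=K_{p,q}$: for $K_0$ take any singularity of $\Upsilon_K$ (there is no constraint, as no index lies below $0$), and for $K_1$ take $s_0$, which by the previous sentence is not a singularity of $\Upsilon_{K_0}=\Upsilon_K$. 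The lemma then yields that $\{K,K_{p,q}\}$ is linearly independent in $\mathcal{C}$.

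There is no serious obstacle here; the only points that need a line of care are checking $g_c(K)>0$ so that Observation~\ref{obs:concgenusbound} is applicable, and confirming that the relevant singularities really do exist (from $\tau(K)\neq 0$ for $\Upsilon_K$, and directly from Proposition~\ref{prop:cablefirstsingeasy} for $\Upsilon_{K_{p,q}}$). Alternatively, one can shortcut the middle steps by invoking Corollary~\ref{cor:Nsmallestbound}: from $t_0\geq\frac{1}{g_c(K)}$ we get $\frac{2}{t_0}\leq 2g_c(K)$, so the smallest integer $N$ with $N>\frac{2}{t_0}$ satisfies $N\leq 2g_c(K)+1$, and $p>2g_c(K)$ gives $p\geq N$, whence $\{K,K_{p,q}\}$ is linearly independent.
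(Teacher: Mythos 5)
Your proof is correct and follows essentially the same route as the paper: combine Observation~\ref{obs:concgenusbound} (singularities of $\Upsilon_K$ lie in $\left[\frac{1}{g_c(K)},\frac{2g_c(K)-1}{g_c(K)}\right]$) with Proposition~\ref{prop:cablefirstsingeasy} (first singularity of $\Upsilon_{K_{p,q}}$ lies in $\left(0,\frac{2}{p}\right]$), note $\frac{2}{p}<\frac{1}{g_c(K)}$, and apply Lemma~\ref{lem:linindep}. Your extra checks that the relevant singularities actually exist are a welcome bit of care the paper leaves implicit, and the alternative shortcut via Corollary~\ref{cor:Nsmallestbound} is also valid.
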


\begin{proof}From Observation~\ref{obs:concgenusbound} we know that the singularities of $\Upsilon_K$ must occur in $\left[\frac{1}{g_c(K)}, \frac{2g_c(K)-1}{g_c(K)}\right]$. By Proposition~\ref{prop:cablefirstsingeasy}, the first singularity of $\Upsilon_{K_{p,q}}$ occurs in $\left(0,\frac{2}{p}\right]$. Since $p>2g_c(K)$, $\frac{2}{p}<\frac{1}{g_c(K)}$, and we can apply Lemma~\ref{lem:linindep}.\end{proof}

\begin{corollary}If $K$ is a knot with $\tau(K)\neq 0$, then there exist a sequence $p_i$ such that for any sequence of integers $q_i$ coprime to $p_i$ the knots $
\{K_{p_i,q_i}\}$ are linearly independent in $\mathcal{C}$. \end{corollary}

\begin{proof}This follows by repeatedly applying Corollary~\ref{cor:Nsmallestbound}. That is, if $t_0$ is the first singularity of $\Upsilon_K$, choose $p_1\geq \frac{2}{t_0}$, then iteratively choose $p_{i+1}\geq \frac{2}{t_i}$ where $t_i$ is the first singularity of $\Upsilon_{K_{p_i,q_i}}$. \end{proof}

\begin{corollary}\label{cor:powercables} Suppose that $K$ has $\tau(K)=g_c(K)>0$ and let $n$ be any integer such that $n>2\tau(K)$. Then $\{K_{n^i,1}\}_{i=0}^\infty$ is linearly independent in $\mathcal{C}$ for $i\geq0$.
\end{corollary}

\begin{proof}Set $p_i\coloneqq n^i$. Using an argument similar to that in the proof of Proposition~\ref{prop:cablefirstsingbetter}, it can be easily verified that $g_c(K_{p_i,1}) \leq p_i\cdot g_c(K)=p_i\cdot \tau(K)$. Further, by using \cite[Theorem 1]{Hom14} $\tau(K_{p_i,1}) =p_i\cdot \tau(K) \leq g_c(K_{p_i,1}) \leq p_i\cdot \tau(K)$ since $\varepsilon(K)=1$ by \cite[Corollary $4$]{Hom14}, and thus, $g_c(K_{p_i,1}) = \tau(K_{p_i,1}) =p_i\cdot \tau(K)$. By Observation~\ref{obs:concgenusbound}, the singularities of $\Upsilon_K$ occur in $\left[\frac{1}{\tau(K)}, \frac{2\tau(K)-1}{\tau(K)}\right]$, and by Observation~\ref{obs:concgenusbound} and Proposition~\ref{prop:cablefirstsingeasy} the first singularity of $\Upsilon_{K_{p_i,1}}$ occurs in $\left[\frac{1}{p_i\cdot\tau(K)}, \frac{2}{p_i}\right]$. Hence, the $\Upsilon_{K_{p_i,1}}$ have distinct singularities since $\frac{2}{p_{i+1}} < \frac{1}{p_i\cdot\tau(K)} $ for all $i\geq0$. We can then apply Lemma~\ref{lem:linindep}.\end{proof}

For 
knots for which $\Upsilon$ has exactly one singularity, we can do better.

\begin{corollary}\label{cor:independentcables}
For any knot $K$ with $g_4(K)=\tau(K)>0$ such that $\Upsilon_K$ has exactly one singularity, the sequence $\{K_{2^i,1}\}_{i=0}^\infty$ is linearly independent in $\C$.
\end{corollary}

\begin{proof}From Proposition~\ref{prop:cablefirstsingbetter}, we know that the first singularity of $\Upsilon_{K_{2^i,1}}$ occurs within $\left[\frac{1}{2^i},\frac{2\tau(K)}{2^{i+1}\tau(K)-(2^{i}-1)}\right]$, and thus are all distinct since $\frac{2\tau(K)}{2^{i+2}\tau(K)-(2^{i+1}-1)}<\frac{1}{2^i}$ for any $i\geq 1$. We can then apply Lemma~\ref{lem:linindep}.\end{proof}

In particular, we get the following.

\begin{corollary}\label{cor:whitehead2^i1cablesindependent} Let $K$ be a knot with $\tau(K)>0$. Then the sequence $\{\Wh^+(K)_{2^i,1}\}_{i=0}^\infty$ is linearly independent in $\mathcal{T}$.\end{corollary}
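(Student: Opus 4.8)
The plan is to deduce Corollary~\ref{cor:whitehead2^i1cablesindependent} directly from the structural results already established for $\Upsilon$--simple knots. First I would observe that $\Wh^+(K)$ is $\Upsilon$--simple with $\tau(\Wh^+(K)) = 1 > 0$: this is exactly Proposition~\ref{prop:whiteheaddoubles} together with the fact that $\tau(K)>0$ forces $k<2\tau(K)$ for the $0$--twisted double, so $\Upsilon_{\Wh^+(K)}(t) = -1 + |1-t|$, which is $1 \cdot (-1+|1-t|)$. In particular $g_4(\Wh^+(K)) \geq |\tau(\Wh^+(K))| = 1$, and since $g(\Wh^+(K)) = 1$ we get $g_4(\Wh^+(K)) = 1 = \tau(\Wh^+(K))$.

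With $L := \Wh^+(K)$ now verified to be an $\Upsilon$--simple knot satisfying $g_4(L) = \tau(L) > 0$, the statement is an immediate application of Corollary~\ref{cor:independentcables}, which asserts precisely that $\{L_{2^i,1}\}_{i=0}^\infty$ is linearly independent (in $\mathcal{C}$). The only additional point is that the independence holds in $\mathcal{T}$: this follows because $\Wh^+(K)$ is topologically slice, hence each cable $\Wh^+(K)_{2^i,1}$ is topologically concordant to $U_{2^i,1} = U$, i.e.\ topologically slice, so the linearly independent family lies inside $\mathcal{T}$ and generates a free abelian subgroup of $\mathcal{T}$ of the expected rank (as noted in the discussion following Lemma~\ref{lem:linindep}).

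I do not anticipate a genuine obstacle here, since all the work has been front-loaded into Proposition~\ref{prop:whiteheaddoubles} and Corollary~\ref{cor:independentcables}. The one place requiring a small check is confirming the hypotheses of Corollary~\ref{cor:independentcables} for $L = \Wh^+(K)$, specifically the equality $g_4(L) = \tau(L)$ rather than merely the inequality $|\tau(L)| \leq g_4(L)$; this is handled by the genus-one Seifert surface for the Whitehead double pattern, which caps off to a genus-one surface in $B^4$, giving $g_4(L) \leq 1 = \tau(L)$. Thus the proof reduces to: invoke Proposition~\ref{prop:whiteheaddoubles} to identify $\Wh^+(K)$ as $\Upsilon$--simple with $\tau = g_4 = 1$, apply Corollary~\ref{cor:independentcables}, and note topological sliceness of the cables to place everything in $\mathcal{T}$.
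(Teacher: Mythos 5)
Your proposal is correct and follows essentially the same route as the paper: identify $\Wh^+(K)$ as $\Upsilon$--simple with $\tau=g_4=1$ via Proposition~\ref{prop:whiteheaddoubles} and the genus-one pattern surface, then apply Corollary~\ref{cor:independentcables}. The extra remark about topological sliceness of the cables placing the family in $\mathcal{T}$ is a correct and welcome detail that the paper leaves implicit.
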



%

For any knot $K$, let $K_{p_1,1;p_2,1;p_3,1; \dots ; p_n,1}$ denote the iterated cable $(\dots((K_{p_1,1})_{p_2,1})\dots)_{p_n,1}$.

\begin{corollary}\label{cor:iteratedcableslinindep} For any knot $K$ with $\tau(K)\neq 0$, we can find a sequence $\{p_i\}$ such that the family of iterated cables $\{K, K_{p_1,1}, K_{p_1,1;p_2,1}, K_{p_1,1;p_2,1;p_3,1}, ...\}$ is linearly independent in $\mathcal{C}$.\end{corollary}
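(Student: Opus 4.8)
The plan is to build the sequence $\{p_i\}$ inductively, using at each stage the fact that cabling with a sufficiently large first parameter pushes the first singularity of $\Upsilon$ as close to $0$ as we like, together with Lemma~\ref{lem:linindep}. Concretely, set $L_0 = K$, and having defined $L_n = K_{p_1,1;\dots;p_n,1}$, note that $\tau(L_n)\neq 0$: indeed $\tau$ of a cable is $p\tau(L_{n-1}) + \frac{(p-1)(q-1)}{2}$ (or the $\varepsilon(K)=-1$ variant) by Proposition~\ref{prop:taucableformula}, and for $q=1$ the correction term vanishes, so $\tau(L_n) = p_n\tau(L_{n-1})$ whenever $\varepsilon(L_{n-1})\neq 0$; since $\tau(L_{n-1})\neq 0$ forces $\varepsilon(L_{n-1})\neq 0$, we get $\tau(L_n)=p_n\tau(L_{n-1})\neq 0$ inductively. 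In particular Proposition~\ref{prop:cablefirstsingeasy} applies to each $L_n$.

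Next I would run the selection. Let $t_n$ denote the first singularity of $\Upsilon_{L_n}$; this exists because $\tau(L_n)\neq 0$ means $\Upsilon_{L_n}$ has nonzero slope $-\tau(L_n)$ at $0$ but must return to $0$ at $t=2$ by Proposition~\ref{prop:upsilon_properties}(2),(4), so it cannot be linear on all of $[0,2]$. Having chosen $p_1,\dots,p_n$ (hence $t_n$), choose $p_{n+1}$ to be any integer with $p_{n+1} > 2/t_n$, i.e.\ $p_{n+1} \geq N$ in the notation of Corollary~\ref{cor:Nsmallestbound}. By Proposition~\ref{prop:cablefirstsingeasy} applied to $L_n$, the first singularity $t_{n+1}$ of $\Upsilon_{L_{n+1}} = \Upsilon_{(L_n)_{p_{n+1},1}}$ lies in $\bigl(0,\tfrac{2}{p_{n+1}}\bigr]$, and since $\tfrac{2}{p_{n+1}} < t_n$, the sequence $t_0 > t_1 > t_2 > \cdots$ is strictly decreasing. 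In particular, for each $i$, the value $t_i$ is a singularity of $\Upsilon_{L_i}$ that is strictly smaller than $t_j$ for all $j < i$, hence is not a singularity of $\Upsilon_{L_j}$ for any $j<i$. Lemma~\ref{lem:linindep} then gives that $\{L_i\}_{i=0}^\infty = \{K, K_{p_1,1}, K_{p_1,1;p_2,1}, \dots\}$ is linearly independent in $\mathcal{C}$.

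The only point requiring any care is the bookkeeping for why $\tau(L_n)\neq 0$ propagates — this is where one uses that the cabling parameter $q=1$ kills the $\frac{(p-1)(q\pm 1)}{2}$ term on the side that matters, so that $\tau$ scales by $p_n$ rather than acquiring a correction that could accidentally cancel. I expect this to be entirely routine given Proposition~\ref{prop:taucableformula} and \cite[Corollary~4]{Hom14}, which identifies $\varepsilon$ for such cables. Everything else is a direct iteration of Corollary~\ref{cor:Nsmallestbound} (equivalently, of Proposition~\ref{prop:cablefirstsingeasy} plus Lemma~\ref{lem:linindep}), with no new input needed.
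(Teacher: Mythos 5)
Your proof is correct and follows essentially the same route as the paper: iteratively choose $p_{i}$ with $2/p_{i} < t_{i-1}$, apply Proposition~\ref{prop:cablefirstsingeasy} to locate the next first singularity in $\left(0,\tfrac{2}{p_i}\right]$, and conclude with Lemma~\ref{lem:linindep}, using Proposition~\ref{prop:taucableformula} to keep $\tau$ nonzero along the tower. One small inaccuracy worth noting: when $\varepsilon(L_{n-1})=-1$ the correction term for a $(p,1)$--cable is $\tfrac{(p-1)(1+1)}{2}=p-1$, which does \emph{not} vanish; nonetheless $\tau(L_n)=p_n\tau(L_{n-1})+(p_n-1)\equiv -1 \pmod{p_n}$ is still nonzero, so your conclusion stands.
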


\begin{proof}Let $t_0$ be the first singularity of $\Upsilon_K$, and $t_i$ the first singularity of $\Upsilon_{K_{p_1,1;p_2,1;...;p_i,1}}$. Choose $p_i$ such that $\frac{2}{p_i} <t_{i-1}$. Now the result follows from Proposition~\ref{prop:cablefirstsingeasy}, where we use the fact that the $\tau$ invariant of the cable knots stays non-zero by \cite[Theorem 1]{Hom14}. \end{proof}



For Whitehead doubles we get the following.

\begin{corollary}\label{cor:whiteheaddoublesiteratedcableslinindep} Let $K$ be a knot with $\tau(K)>0$. Then we can find a sequence $\{p_i\}$ such that the family of iterated cables $\{\Wh^+(K), \Wh^+(K)_{p_1,1}, \Wh^+(K)_{p_1,1;p_2,1}, \Wh^+(K)_{p_1,1;p_2,1;p_3,1}, ...\}$ is linearly independent in $\mathcal{T}$.\end{corollary}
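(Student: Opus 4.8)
The plan is to deduce this directly from Corollary~\ref{cor:iteratedcableslinindep} applied to the knot $\Wh^+(K)$, together with the elementary observation that iterated $(p_i,1)$--cables of a topologically slice knot remain topologically slice. By Proposition~\ref{prop:whiteheaddoubles}, $\Wh^+(K)$ is $\Upsilon$--simple with $\tau(\Wh^+(K))=1\neq 0$, and since $\Wh^+(K)$ has trivial Alexander polynomial it is topologically slice, so $\Wh^+(K)\in\mathcal{T}$.

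First I would invoke Corollary~\ref{cor:iteratedcableslinindep} with companion $\Wh^+(K)$: this produces a sequence $\{p_i\}$ for which the iterated cables $\{\Wh^+(K),\Wh^+(K)_{p_1,1},\Wh^+(K)_{p_1,1;p_2,1},\dots\}$ are linearly independent in $\mathcal{C}$. Recall that the mechanism there is to choose $p_{i+1}$ large enough that $\tfrac{2}{p_{i+1}}$ is strictly smaller than the first singularity of the cable constructed so far; Proposition~\ref{prop:cablefirstsingeasy} then forces the first singularities of the successive iterated cables to form a strictly decreasing sequence, so each has a first singularity not shared by any earlier member of the family, and Lemma~\ref{lem:linindep} applies. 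The hypothesis $\tau\neq 0$ needed at each stage is automatic: by Proposition~\ref{prop:taucableformula} the $\tau$--invariant of $\Wh^+(K)_{p_1,1;\dots;p_i,1}$ is the positive integer $p_1\cdots p_i$ (using $\varepsilon=1$ for these knots, which themselves satisfy $g_4=\tau$ as in the proof of Proposition~\ref{prop:cablefirstsingbetter}).

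Next I would verify that every knot in this family is topologically slice. Since the $(p,1)$--cabling companion torus knot $T_{p,1}$ is the unknot, the Alexander polynomial cabling formula gives $\Delta_{J_{p,1}}(t)=\Delta_J(t^p)$ for any knot $J$, so trivial Alexander polynomial is preserved under $(p,1)$--cabling; as $\Delta_{\Wh^+(K)}\equiv 1$, an induction shows each $\Wh^+(K)_{p_1,1;\dots;p_i,1}$ has trivial Alexander polynomial and hence is topologically slice by Freedman's theorem. (Alternatively, topological concordance is preserved under satellite operations, so the iterated $(p_i,1)$--cables of the topologically slice knot $\Wh^+(K)$ are topologically concordant to the corresponding iterated cables of the unknot.) Consequently the whole family lies in $\mathcal{T}$, and being linearly independent in $\mathcal{C}$ it is a fortiori linearly independent in the subgroup $\mathcal{T}\leq\mathcal{C}$, which is the assertion. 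I do not expect any serious obstacle; the only care needed is the simultaneous bookkeeping of the cabling parameters and the stability of topological sliceness under the iteration, both handled above.
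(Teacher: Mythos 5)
Your proposal is correct and follows exactly the route the paper intends: the corollary is stated there as the "$\Wh^+$ version" of Corollary~\ref{cor:iteratedcableslinindep}, applied to the $\Upsilon$--simple knot $\Wh^+(K)$ with $\tau(\Wh^+(K))=1\neq 0$, with the observation that iterated $(p_i,1)$--cables of a topologically slice knot stay topologically slice. Your filling-in of the details (the $\tau$--invariants staying nonzero via Proposition~\ref{prop:taucableformula}, and sliceness preserved either via the Alexander polynomial or via concordance being preserved by satellites) is accurate and matches what the paper leaves implicit.
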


In addition to showing linear independence of families of cables, we can also sometimes conclude that the families form a basis for a free summand of $\mathcal{C}$ or $\mathcal{T}$. We recall and prove Theorems~\ref{thm:cablebasis} and Corollary~\ref{cor:whiteheadcablebasis}.

\newtheorem*{thm:cablebasis}{Theorem~\ref{thm:cablebasis}}
\begin{thm:cablebasis}For any knot $K$ with $\tau(K)=g_c(K)=1$, the knots $\{K_{2^i,1}\}_{i=0}^\infty$ form a basis for an infinite rank summand of $\mathcal{C}$.\end{thm:cablebasis}

\begin{proof}We first note that since $\tau(K)\leq g_4(K)\leq g_c(K)$ for any knot, we have that $\tau(K)=g_4(K)=g_c(K)=1$.

Corollary~\ref{cor:independentcables} proves that the family is linearly independent by showing that the singularities of the $\Upsilon$ functions occur at different locations. Here we will show that the slope change must be the minimum allowed slope change at the first singularity, allowing us to use the second part of Lemma~\ref{lem:linindep}.

Since $\tau(K)=g_c(K)=1$, 
we know (by Proposition~\ref{prop:concgenusone}) that $\Upsilon_K$ has exactly one singularity, which occurs at $1$ and the slope change is $2$, the lowest possible. From Proposition~\ref{prop:cablefirstsingbetter}, we know that the first singularity of $\Upsilon_{K_{2^i,1}}$, for $i\geq0$, occurs in the interval $\left[\frac{1}{2^i},\frac{2}{2^i+1}\right]$. We also know from the proof of Corollary~\ref{cor:powercables} that $g_c(K_{2^i,1})=2^i=\tau(K_{2^i,1})$, and thus the maximum possible slope change in $\Upsilon_{K_{2^i,1}}$ is $2^{i+1}$. Since the first slope equals $-g_c(K_{2^i,1})$, we know that the slope change at the first singularity must be positive. We will now determine the magnitude of the slope change at the first singularity, call it $\Delta m_i$, of $\Upsilon_{K_{2^i,1}}$.

So far we have $0<\Delta m_i \leq 2g_c(K_{2^i,1})=2^{i+1}$.
We know that the singularities occur at points of the form $\frac{2k}{\Delta m_i}$ from Observation~\ref{obs:slopes}. For the first singularity, we have $\frac{1}{2^i}\leq \frac{2k}{\Delta m_i} \leq\frac{2}{2^i+1}$. Thus, $k2^i < k(2^i+1)\leq \Delta m_i \leq 2^{i+1}$ where $k>0$. This implies that $k=1$ and thus $2^i+1\leq \Delta m_i \leq 2^{i+1}$. This means that the first singularity occurs at $\frac{2}{2^i+\ell}$ for some $\ell$ with $1\leq \ell\leq 2^i$, where the slope change must be exactly $2^i+\ell$ (notice that the next higher possible slope change is $2(2^i+\ell)$ but this is not allowed since the slope changes are bounded above by $2^{i+1}=2g_c(K_{2^i,1}))$. Thus the slope change is the lowest possible! We can now use Lemma~\ref{lem:linindep} to conclude that $\{K_{2^i,1}\}_{i=0}^\infty$ is a basis for an infinite summand of $\mathcal{C}$.
\end{proof}

Since $\tau(\Wh^+(K))=g_4(\Wh^+(K))=g_c(\Wh^+(K))=g_3(\Wh^+(K))=1$ for all knots $K$ with $\tau(K)>0$, we have the following.

\newtheorem*{cor:whiteheadcablebasis}{Corollary~\ref{cor:whiteheadcablebasis}}
\begin{cor:whiteheadcablebasis}Let $K$ be a knot with $\tau(K)>0$. Then $\{\Wh^+(K)_{2^i,1}\}_{i=0}^\infty$ is a basis for an infinite rank summand of $\mathcal{T}$. \end{cor:whiteheadcablebasis}


Notice that all the knots in the above corollary have trivial Alexander polynomial. We could also have used generalized Whitehead doubles by Corollary~\ref{cor:genwhiteheaddoubles}. This should be compared to the main result of~\cite{KimPark16}, which showed that the knots $\{\Wh^+(RHT)_{n,1}\}_{n=1}^\infty$ form a basis for an infinite rank summand of $\mathcal{T}$.

So far our results have utilized the non-zero $\tau$--invariant of knots. We now investigate cables of knots with vanishing $\tau$--invariant but non-vanishing $\upsilon$--invariant (recall that for a knot $K$, $\upsilon(K)=\Upsilon_K(1)$).

\begin{proposition}\label{prop:smallupsilon} Let $K$ be a knot with $\tau(K)=0$ and $\upsilon(K)\neq0$. Then the first singularity of $\Upsilon_{K_{p,q}}$ is in $\left(0,\frac{1}{p}\right)$ for any $p\geq 2$.
\end{proposition}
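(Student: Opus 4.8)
The plan is to follow the now-familiar strategy: assume toward a contradiction that $\Upsilon_{K_{p,q}}$ has no singularity in $\left(0,\frac{1}{p}\right)$, so that on this interval $\Upsilon_{K_{p,q}}(t)=-\tau(K_{p,q})\cdot t$, and then play Chen's inequality~(\ref{eqn:cheninequality}) against the hypothesis $\tau(K)=0$, $\upsilon(K)\neq 0$ to derive a contradiction. Since $\tau(K)=0$ we have $\varepsilon(K)\in\{-1,0,1\}$ and in all cases Proposition~\ref{prop:taucableformula} gives $\tau(K_{p,q})=\frac{(p-1)(q-1)}{2}$ if $q>0$ and $\tau(K_{p,q})=\frac{(p-1)(q+1)}{2}$ if $q<0$ (for $\varepsilon=\pm1$ the terms $p\tau(K)$ vanish; for $\varepsilon=0$ we get the torus-knot value, which is the same). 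So the slope of $\Upsilon_{K_{p,q}}$ at $0^+$ is $-\frac{(p-1)(q\mp 1)}{2}$ depending on the sign of $q$.

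The key input is the value $\Upsilon_K(1)=\upsilon(K)\neq 0$. I would evaluate Chen's inequality at $t=\frac{1}{p}$, which lies in $\left(0,\frac{2}{p}\right]$ so the inequality applies, and at which $\Upsilon_K(pt)=\Upsilon_K(1)=\upsilon(K)$. In the case $q>0$, the left inequality in~(\ref{eqn:cheninequality}) reads
\[\upsilon(K)-\frac{(p-1)(q+1)}{2}\cdot\frac{1}{p}\leq \Upsilon_{K_{p,q}}\!\left(\tfrac1p\right)=-\frac{(p-1)(q-1)}{2}\cdot\frac1p,\]
which simplifies to $\upsilon(K)\leq \frac{p-1}{p}\left(\frac{q+1}{2}-\frac{q-1}{2}\right)=\frac{p-1}{p}<1$; this forces $\upsilon(K)\leq 0$ (since $\upsilon$ of a knot is... well, at least it gives $\upsilon(K)<1$, and by the integrality-type constraints one expects $\upsilon(K)\le 0$). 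Symmetrically, using the right inequality in~(\ref{eqn:cheninequality}) at $t=\frac1p$ I would get $\Upsilon_{K_{p,q}}(\tfrac1p)\leq \upsilon(K)-\frac{(p-1)(q-1)}{2p}$, i.e.\ $-\frac{(p-1)(q-1)}{2p}\le \upsilon(K)-\frac{(p-1)(q-1)}{2p}$, giving $\upsilon(K)\geq 0$. Combining, $\upsilon(K)=0$, contradicting the hypothesis. The case $q<0$ is handled the same way (or by passing to $(-K)_{p,-q}$ via $\Upsilon_{K_{p,q}}(t)=-\Upsilon_{(-K)_{p,-q}}(t)$, noting $\tau(-K)=0$ and $\upsilon(-K)=-\upsilon(K)\neq 0$), since the roles of the two inequalities just swap.

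The main obstacle is bookkeeping: one must be careful that at $t=\frac1p$ the quantity $\Upsilon_{K_{p,q}}(\frac1p)$ really equals $-\tau(K_{p,q})\cdot\frac1p$, which requires the assumed-away singularity to be \emph{strictly} before $\frac1p$ — so I would actually run the argument at $t=\frac1p-\delta$ for small $\delta>0$ and take $\delta\to 0$, or phrase the no-singularity assumption on the half-open interval $\left(0,\frac1p\right)$ and use continuity of $\Upsilon$. A secondary subtlety is confirming that $\frac1p\le\frac2p$ so that~(\ref{eqn:cheninequality}) is valid there (immediate), and correctly matching which of the two inequalities in~(\ref{eqn:cheninequality}) yields $\upsilon(K)\le 0$ versus $\upsilon(K)\ge 0$; the cancellation $\frac{q+1}{2}-\frac{q-1}{2}=1$ is what makes both bounds collapse to exactly $\upsilon(K)=0$, and that is the crux of the argument.
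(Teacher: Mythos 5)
Your strategy coincides with the paper's: assume toward a contradiction that $\Upsilon_{K_{p,q}}$ has no singularity in $\left(0,\frac{1}{p}\right)$, evaluate Chen's inequality at $t=\frac{1}{p}$ (where $\Upsilon_K(pt)=\upsilon(K)$), and use integrality of $\upsilon$ to force $\upsilon(K)=0$. The organization differs slightly: the paper first reduces to $\upsilon(K)\le -1$ by passing to $-K$ and then uses only one side of Chen's inequality in each $\varepsilon$--case, whereas you use both sides at once to trap $\upsilon(K)$ in a window of width $\frac{p-1}{p}<1$; both work, and your two-sided version dispenses with the sign reduction. One slip to correct: by Proposition~\ref{prop:taucableformula}, when $\tau(K)=0$ the value of $\tau(K_{p,q})$ is $\frac{(p-1)(q-1)}{2}$ or $\frac{(p-1)(q+1)}{2}$ according to whether $\varepsilon(K)=1$ or $\varepsilon(K)=-1$; the sign of $q$ decides the matter only when $\varepsilon(K)=0$. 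So your displayed computation for ``$q>0$'' assumes the wrong value of $\tau(K_{p,q})$ when $\varepsilon(K)=-1$ (and symmetrically for $q<0$). This does not sink the argument: whichever of the two values $\tau(K_{p,q})$ takes, the two sides of Chen's inequality at $t=\frac{1}{p}$, combined with $\Upsilon_{K_{p,q}}\bigl(\frac1p\bigr)=-\frac{\tau(K_{p,q})}{p}$ (by continuity), give
\[\frac{(p-1)(q-1)}{2p}-\frac{\tau(K_{p,q})}{p}\;\le\;\upsilon(K)\;\le\;\frac{(p-1)(q+1)}{2p}-\frac{\tau(K_{p,q})}{p},\]
an interval of length $\frac{p-1}{p}<1$ whose endpoints are $\bigl\{0,\frac{p-1}{p}\bigr\}$ in one case and $\bigl\{-\frac{p-1}{p},0\bigr\}$ in the other; either way the only integer it contains is $0$, the desired contradiction. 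Finally, replace the hedge ``by integrality-type constraints one expects $\upsilon(K)\le 0$'' with the actual fact you need: $\upsilon(K)\in\Z$ by \cite[Proposition $1.3$]{OzStipSz14}, which is exactly what the paper invokes.
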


\begin{proof}
First we consider the case $\upsilon(K) <0$. Note that $\upsilon(K)$ is an integer by \cite[Proposition $1.3$]{OzStipSz14}, and thus if $\upsilon(K)<0$ then $\upsilon(K)\leq -1$.
Suppose that $\Upsilon_{K_{p,q}}$ does not have a singularity in $\left(0,\frac{1}{p}\right)$, i.e.\ $\Upsilon_{K_{p,q}}(t)=-\tau(K_{p,q})\cdot t$ on $\left(0,\frac{1}{p}\right]$. If $\varepsilon(K)=1$, we know that $\tau(K_{p,q}) = p\tau(K) + \frac{(p-1)(q-1)}{2}=\frac{(p-1)(q-1)}{2}$ from \cite[Theorem 1]{Hom14}, and thus,
$$\Upsilon_{K_{p,q}}(t)=-\frac{(p-1)(q-1)}{2} t$$
for any $0\leq t\leq \frac{1}{p}$.
By Chen's inequality~(\ref{eqn:cheninequality}), for $0\leq t\leq \frac{1}{p}$,
$$-\frac{(p-1)(q-1)}{2} t \leq \Upsilon_K(pt)-\frac{(p-1)(q-1)}{2}t.$$
When $t=\frac{1}{p}$, we see that $0\leq \upsilon(K)$
which is a contradiction, since $\upsilon(K) \leq -1$.

If $\varepsilon(K)=-1$, we know that $\tau(K_{p,q}) = p\tau(K) + \frac{(p-1)(q+1)}{2}=\frac{(p-1)(q+1)}{2}$ from \cite[Theorem 1]{Hom14}, and thus,
$$\Upsilon_{K_{p,q}}(t)=-\frac{(p-1)(q+1)}{2} t$$
for any $0\leq t\leq \frac{1}{p}$. By Chen's inequality~(\ref{eqn:cheninequality}), for $0 \leq t\leq \frac{1}{p}$,
$$-\frac{(p-1)(q+1)}{2} t \leq \Upsilon_K(pt)-\frac{(p-1)(q-1)}{2}t.$$
When $t=\frac{1}{p}$ we have,
$$-\frac{p-1}{p}\leq \upsilon(K)$$
which is a contradiction, since $p\geq 2$ and $\upsilon(K) \leq -1$.

If $\varepsilon(K)=0$, the proof is the same as the proof for the $\varepsilon(K)=1$ case if $q>0$ and is the same as the proof for the $\varepsilon(K)=-1$ case if $q<0$, by \cite[Theorem 1]{Hom14}.

When $\upsilon(K)>0$, the proof follows from the above by using $-K$, since $\upsilon(-K)=-\upsilon(K) < 0$.
\end{proof}

We have the following immediate corollary.

\begin{corollary}\label{cor:smallupsilon} Let $K$ be a knot with $\tau(K)=0$ and $\upsilon(K)\neq0$ such that the first singularity of $\Upsilon_K$ occurs at $t_0$. Then $K$ and $K_{p,q}$ are linearly independent in $\mathcal{C}$ for any $p>\frac{1}{t_0}$.\end{corollary}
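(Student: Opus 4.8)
The plan is to follow the proof of Corollary~\ref{cor:Nsmallestbound} essentially verbatim, replacing Proposition~\ref{prop:cablefirstsingeasy} by Proposition~\ref{prop:smallupsilon}. First I would check that the hypotheses make sense: since $\upsilon(K)=\Upsilon_K(1)\neq 0$, the function $\Upsilon_K$ is not identically zero, so (as $\Upsilon_K(0)=0$ and the slope of $\Upsilon_K$ at $0$ equals $-\tau(K)=0$) it must have at least one slope change, hence a well-defined first singularity $t_0$; moreover, by the symmetry $\Upsilon_K(t)=\Upsilon_K(2-t)$ this first singularity satisfies $t_0\leq 1$, so $N\geq 2$.

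Next I would apply Proposition~\ref{prop:smallupsilon}: because $\tau(K)=0$ and $\upsilon(K)\neq 0$, for every $p\geq 2$ (in particular every $p\geq N$) the first singularity of $\Upsilon_{K_{p,q}}$ lies in the open interval $\left(0,\frac{1}{p}\right)$; in particular such a singularity exists. From $p\geq N>\frac{1}{t_0}$ I obtain $\frac{1}{p}\leq\frac{1}{N}<t_0$, so this first singularity of $\Upsilon_{K_{p,q}}$ occurs strictly to the left of $t_0$. Since $t_0$ is the first singularity of $\Upsilon_K$, the function $\Upsilon_K$ has no singularity in $\left(0,t_0\right)$, and therefore the exhibited singularity of $\Upsilon_{K_{p,q}}$ is not a singularity of $\Upsilon_K$.

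Finally I would invoke Lemma~\ref{lem:linindep} for the two-element family ordered as $(K,K_{p,q})$: the knot $K$, indexed first, has the singularity $t_0$ (vacuously not a singularity of an earlier knot), and $K_{p,q}$, indexed second, has the singularity produced above, which we have just checked is not a singularity of $\Upsilon_K$. Hence $\{K,K_{p,q}\}$ is linearly independent in $\mathcal{C}$. I do not expect any real obstacle here: all the substantive work is already contained in Proposition~\ref{prop:smallupsilon}, and what remains is the same bookkeeping with first singularities and Lemma~\ref{lem:linindep} used repeatedly throughout this section.
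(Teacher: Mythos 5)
Your proof is correct and follows essentially the same route as the paper: apply Proposition~\ref{prop:smallupsilon} to place the first singularity of $\Upsilon_{K_{p,q}}$ in $\left(0,\frac{1}{p}\right)$, note $\frac{1}{p}<t_0$ from $p\geq N>\frac{1}{t_0}$, and conclude via Lemma~\ref{lem:linindep}. The extra checks you include (existence of $t_0$, the explicit ordering of the two-element family) are harmless elaborations of the paper's three-line argument.
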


\begin{proof}By Proposition~\ref{prop:smallupsilon}, the first singularity of $\Upsilon_{K_{p,q}}$ occurs in $\left(0,\frac{1}{p}\right)$. Since $p>\frac{1}{t_0}$, we see that $\frac{1}{p} < t_0$, and thus we can use Lemma~\ref{lem:linindep}.\end{proof}

\begin{corollary}\label{cor:smallupsilon4} Let $K$ be a knot with $\tau(K)=0$ and $\upsilon(K)\neq0$. Then there exists a sequence of positive integers $\{p_i\}$ such that for any sequence of integers $q_i$ coprime to $p_i$ the knots $\{K_{p_i,q_i}\}$ are linearly independent in $\mathcal{C}$.\end{corollary}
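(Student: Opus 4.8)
The plan is to imitate the proofs of Corollaries~\ref{cor:iteratedcableslinindep} and~\ref{cor:smallupsilon}: I would build the sequence recursively so that the first singularity of each newly chosen cable lies strictly to the left of every singularity of all the previously chosen cables, and then quote Lemma~\ref{lem:linindep}. The only input needed is Proposition~\ref{prop:smallupsilon}, which applies to $K$ since $\tau(K)=0$ and $\upsilon(K)\neq 0$, together with the elementary fact that the ``first singularity'' of an $\Upsilon$ function is the minimum of its singularities.

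Concretely, I would first note that Proposition~\ref{prop:smallupsilon} guarantees, for every integer $p\geq 2$ and every $q$ coprime to $p$ with $q\neq 0$, that $\Upsilon_{K_{p,q}}$ has a first singularity $s(p,q)$ lying in the open interval $\left(0,\tfrac1p\right)$; in particular such a singularity exists. (As a sanity check: since $\tau(K)=0$, Proposition~\ref{prop:upsilon_properties}(5) says $\Upsilon_K$ has slope $0$ at $t=0$, while $\Upsilon_K(1)=\upsilon(K)\neq 0$, so $\Upsilon_K$ is itself non-constant and has singularities, though this is not directly used.)

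Then I would choose the sequence inductively. Set $p_1=2$, $q_1=1$, and let $s_1\in\left(0,\tfrac12\right)$ be the first singularity of $\Upsilon_{K_{p_1,q_1}}$. Having chosen $(p_1,q_1),\dots,(p_{i-1},q_{i-1})$ with corresponding first singularities $s_1,\dots,s_{i-1}>0$, pick any integer $p_i\geq 2$ with $\tfrac1{p_i}<\min\{s_1,\dots,s_{i-1}\}$ and set $q_i=1$ (a nonzero integer coprime to $p_i$). By Proposition~\ref{prop:smallupsilon}, the first singularity $s_i$ of $\Upsilon_{K_{p_i,q_i}}$ satisfies $0<s_i<\tfrac1{p_i}<s_j$ for every $j<i$. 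Since $s_j$ is the \emph{first} singularity of $\Upsilon_{K_{p_j,q_j}}$, every singularity of $\Upsilon_{K_{p_j,q_j}}$ is $\geq s_j>s_i$, so $s_i$ is not a singularity of $\Upsilon_{K_{p_j,q_j}}$ for any $j<i$. Taking $t_i=s_i$ in Lemma~\ref{lem:linindep} shows $\{K_{p_i,q_i}\}_{i=1}^\infty$ is linearly independent in $\mathcal{C}$; note that each $p_i$ is automatically a positive integer and each $q_i$ a nonzero integer, as required.

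I do not anticipate any real obstacle: once Proposition~\ref{prop:smallupsilon} is in hand the argument is a routine ``escape to the left'' induction, valid because every $s_j$ is strictly positive so the recursive choice of $p_i$ is always possible. The single point worth stating carefully is the implication that being the first singularity to the left of all earlier singularities forces genuine novelty — which holds precisely because ``first singularity'' means ``smallest singularity.''
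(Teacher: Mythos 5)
Your proposal is correct and is essentially the paper's own argument: the paper likewise invokes Proposition~\ref{prop:smallupsilon} to place the first singularity of $\Upsilon_{K_{p_i,q_i}}$ in $\left(0,\frac{1}{p_i}\right)$ and chooses $p_i$ so that $\frac{1}{p_i}$ is arbitrarily small (with $q_i$ coprime to $p_i$), then concludes by Lemma~\ref{lem:linindep}. Your write-up just makes the ``escape to the left'' induction and the role of \emph{first} (i.e.\ smallest) singularity explicit, which the paper leaves implicit.
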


\begin{proof} By Proposition~\ref{prop:smallupsilon}, the first singularity of $\Upsilon_{K_{p_i,q_i}}$ occurs in $\left(0,\frac{1}{p_i}\right)$ and we can choose $p_i$ such that $\frac{1}{p_i}$ is arbitrarily small. \end{proof}

\begin{remark} In Corollary~\ref{cor:smallupsilon4}, we could have chosen $K$ to be a topologically slice knot. If $\varepsilon(K)=1$ or $0$, choose $q_i=1$ and if $\varepsilon(K)=-1$ choose $q_i=-1$ for all $i$. Then each element in the set $\{K_{p_i,q_i}\}$ has vanishing $\tau$--invariant and each element is topologically slice (in particular, the signature $\sigma$ vanishes). Further any two elements in this set have identical $\varepsilon$ invariant (see \cite[Theorem $2$]{Hom14}), even though they are linearly independent in $\mathcal{T}$ by the above result. \end{remark}

The following lemma produces several topologically slice knots which satisfy the assumptions of Proposition~\ref{prop:smallupsilon}.

\begin{figure}[t]
\centering
\includegraphics[width=2.5in]{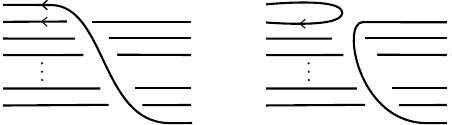}
\caption{For the leftmost full twist in a $(2i,q)$ cabling pattern, perform the non-orientable band sum shown above. Since this non-orientable band sum occurs on the torus the resulting pattern is still a cabling pattern, and further by simply counting the winding number we see that the resulting pattern is a $(2i',q')$ cabling pattern for some $i' <i$.}\label{fig:bandmove}
\end{figure}

\begin{lemma}\label{lem:smallupsilon} Let $K$ be any topologically slice knot such that $g_4(K)=\tau(K)>0$ such that $\Upsilon_K$ has exactly one singularity. Let $J_n = 2nK \# -K_{2n,1}$. Then $J_n$ is a topologically slice knot with $\tau(J_n)=0$ and $\upsilon(J_n)<0$, for all $n \geq 1$.
\end{lemma}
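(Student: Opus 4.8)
The plan is to verify the three assertions in order; the first two are quick bookkeeping and all the content sits in the third. For topological sliceness: since $K$ is topologically slice, so is $2nK$; the $(2n,1)$-cabling pattern carries the unknot to the torus knot $T_{2n,1}$, which is the unknot, and the satellite operation respects topological concordance, so $K_{2n,1}$ is topologically concordant to the unknot, hence topologically slice, and so is $-K_{2n,1}$. A connected sum of topologically slice knots is topologically slice, so $J_n$ is topologically slice. For $\tau(J_n)=0$: since $\varepsilon(K)=1$, Proposition~\ref{prop:taucableformula}(1) gives $\tau(K_{2n,1}) = 2n\tau(K) + \tfrac{(2n-1)(1-1)}{2} = 2n\tau(K)$, and additivity of $\tau$ then yields $\tau(J_n) = 2n\tau(K) - \tau(K_{2n,1}) = 0$.

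For $\upsilon(J_n)<0$, I would first reduce to a statement about the cable alone. By additivity of $\Upsilon$ (Proposition~\ref{prop:upsilon_properties}(1)) and the fact that $\upsilon(K) = \Upsilon_K(1) = -\tau(K)$ for the $\Upsilon$-simple knot $K$ (Definition~\ref{def:upsilonsimple}),
\[\upsilon(J_n) \;=\; 2n\upsilon(K) - \upsilon(K_{2n,1}) \;=\; -2n\tau(K) - \upsilon(K_{2n,1}),\]
so it suffices to show $\upsilon(K_{2n,1}) > -2n\tau(K)$; equivalently, that at $t=1$ the graph of $\Upsilon_{K_{2n,1}}$ lies strictly above its initial line $\ell(t) := -2n\tau(K)\,t$ (recall $\Upsilon'_{K_{2n,1}}(0) = -\tau(K_{2n,1}) = -2n\tau(K)$ by Proposition~\ref{prop:upsilon_properties}(5)).

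The first ingredient is Chen's inequality (\ref{eqn:cheninequality}) with $p=2n$, $q=1$, evaluated at the right endpoint $t=\tfrac{2}{2n}=\tfrac1n$ of its range: since $\Upsilon_K(2)=0$,
\[\Upsilon_{K_{2n,1}}\!\left(\tfrac1n\right) \;\geq\; \Upsilon_K(2) - \tfrac{(2n-1)(1+1)}{2}\cdot\tfrac1n \;=\; -2+\tfrac1n,\]
which, because $\tau(K)\geq 1$, is strictly larger than $\ell(\tfrac1n) = -2\tau(K)$; thus $(\Upsilon_{K_{2n,1}} - \ell)(\tfrac1n) \geq \tfrac1n > 0$. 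The second ingredient is a slope bound: by the parallel-copies argument used in the proof of Proposition~\ref{prop:cablefirstsingbetter} (and Corollary~\ref{cor:powercables}), $g_c(K_{2n,1}) \leq 2n\,g_c(K) = 2n\tau(K)$, the last equality because $\tau(K_{2n,1}) = 2n\tau(K) \leq g_c(K_{2n,1})$ together with $g_c(K) = \tau(K)$ for the knots to which this is applied (for instance $\Wh^+(RHT)$ has $g_3=g_4=g_c=\tau=1$). Then by Proposition~\ref{prop:upsilon_properties}(9) and the symmetry Proposition~\ref{prop:upsilon_properties}(2) every slope of $\Upsilon_{K_{2n,1}}$ lies in $[-2n\tau(K), 2n\tau(K)]$, so the piecewise-linear function $g := \Upsilon_{K_{2n,1}} - \ell$ satisfies $g' = \Upsilon'_{K_{2n,1}} + 2n\tau(K) \geq 0$; hence $g$ is nondecreasing and $g(1) \geq g(\tfrac1n) > 0$, i.e.\ $\upsilon(K_{2n,1}) > -2n\tau(K)$, whence $\upsilon(J_n) < 0$.

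The hard part, and the only place real work is needed, is bridging $\Upsilon_{K_{2n,1}}$ from $t=\tfrac1n$, where Chen's inequality stops being valid, out to $t=1$; this is precisely where the concordance-genus bound $g_c(K_{2n,1}) \leq 2n\,g_c(K)$ is used, to keep control of the slopes of $\Upsilon_{K_{2n,1}}$ on $[\tfrac1n,1]$. I would note that for $n=1$ this bridging step is vacuous, since then $\tfrac2p = 1$ and Chen's inequality already pins down $\Upsilon_{K_{2,1}}(1) \geq -1 > -2\tau(K)$ directly.
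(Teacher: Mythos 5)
Your reductions to $\tau(J_n)=0$ and to the inequality $\upsilon(K_{2n,1})>-2n\tau(K)$ are correct and match the paper, and your $n=1$ observation via Chen's inequality is fine. But the bridging step for $n\geq 2$ has a genuine gap: to make $g=\Upsilon_{K_{2n,1}}-\ell$ nondecreasing you need every slope of $\Upsilon_{K_{2n,1}}$ to be at least $-2n\tau(K)$, and you obtain this from $g_c(K_{2n,1})\leq 2n\,g_c(K)=2n\tau(K)$ — but the equality $g_c(K)=\tau(K)$ is \emph{not} among the hypotheses of the lemma, which assumes only that $K$ is topologically slice, $\Upsilon$-simple, $\varepsilon(K)=1$ and $\tau(K)>0$. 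An $\Upsilon$-simple knot can perfectly well have $g_c(K)>\tau(K)$ (add to $\Wh^+(T_{2,3})$ a topologically slice summand with vanishing $\Upsilon$ but large concordance genus), and then your slope bound only gives $\Upsilon'_{K_{2n,1}}\geq -2n\,g_c(K)$, which does not dominate $-2n\tau(K)$ and the monotonicity of $g$ fails. So as written you have proved the lemma only under an extra hypothesis.

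The paper avoids slopes entirely and instead bounds the value $\upsilon(K_{2n,1})$ directly: performing non-orientable band sums on the cabling torus (together with the fact that a $(2,q)$-cabling pattern bounds a M\"obius band) gives $\gamma_4(K_{2n,1})\leq n$ — crucially a factor of $2$ better than the orientable bound $g_4(K_{2n,1})\leq 2n\,g_4(K)$ — and since $K$ is topologically slice, Proposition~\ref{prop:sigcableformula} gives $\sigma(K_{2n,1})=0$, so Proposition~\ref{prop:upsilon_properties}(10) yields $\lvert\upsilon(K_{2n,1})\rvert\leq n$. Hence $\upsilon(J_n)=-2n\tau(K)-\upsilon(K_{2n,1})\leq -2n\tau(K)+n<0$, with no genus hypotheses beyond those stated. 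If you want to keep your Chen-inequality route, you must either add $g_c(K)=\tau(K)$ (or some substitute controlling the slopes of $\Upsilon_{K_{2n,1}}$ on $[\tfrac1n,1]$) to the hypotheses, or replace the bridging step by the crosscap-number argument above.
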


\begin{proof} By \cite[Theorem 1]{Hom14}, $\tau(K_{2n,1})=2n\tau(K)$, and hence $\tau(J_n) = \tau(2nK \# -K_{2n,1}) = 2n\tau(K) - \tau(K_{2n,1})=0$. Note that since $\Upsilon_K$ has exactly one singularity, $\upsilon(K)=-\tau(K)<0$.

By performing the (non-orientable) band sum shown in Figure~\ref{fig:bandmove}, and the fact that any $(2,n)$ cabling pattern bounds a M\"{o}bius band, it is easy to verify that $\gamma_4(K_{2n,1}) \leq n$
(see also~\cite[Section $4$]{Bat14}). Then by Proposition~\ref{prop:upsilon_properties}(10), we have the following inequality.
$$ \Biggl| \upsilon(K_{2n,1}) - \frac{\sigma(K_{2n,1})}{2} \Biggr| \leq \gamma_4(K_{2n,1}) \leq  n
$$
Since $K$ is topologically slice $\sigma(K_{2,1})=\sigma(K)=0$, where we use Proposition~\ref{prop:sigcableformula}. Thus, we see that
$$-\upsilon(K_{2n,1}) \leq n$$
and thus, $$\upsilon(J_n) = 2n\upsilon(K)- \upsilon(K_{2n,1}) =-2n\tau(K)- \upsilon(K_{2n,1})\leq-2n\tau(K) +n.$$

Thus $\upsilon(J_n)<0$ since $\tau(K) >0$ and $n\geq 1$.
\end{proof}

This gives the following corollary, where we only consider the case $q=\pm1$ since it is the most interesting.

\begin{corollary}\label{cor:smallupsilon2} Let $K$ be any topologically slice knot such that $g_4(K)=\tau(K)>0$ such that $\Upsilon_K$ has exactly one singularity. Let $J_n = 2nK \# -K_{2n,1}$, then $\{J_n, (J_n)_{p,\pm1}\}$ is linearly independent in $\mathcal{T}$ for any $p > 2n$.
\end{corollary}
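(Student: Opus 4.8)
The plan is to combine Lemma~\ref{lem:smallupsilon} with Corollary~\ref{cor:smallupsilon}; the one extra ingredient needed is a lower bound on the location of the first singularity of $\Upsilon_{J_n}$.

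By Lemma~\ref{lem:smallupsilon}, each $J_n$ is a topologically slice knot with $\tau(J_n)=0$ and $\upsilon(J_n)<0$, so in particular $\upsilon(J_n)\neq 0$ and Corollary~\ref{cor:smallupsilon} applies to $J_n$ once its first singularity is located. Since $\Upsilon$ is a homomorphism (Proposition~\ref{prop:upsilon_properties}(1)), $\Upsilon_{J_n}=2n\,\Upsilon_{K}-\Upsilon_{K_{2n,1}}$. As $K$ is $\Upsilon$--simple, $\Upsilon_{K}(t)=-\tau(K)t$ for $t\in[0,1]$, so $\Upsilon_K$ is linear on $(0,1)$ with its only corner at $t=1$. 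As $K$ is also $\Upsilon$--simple with $g_4(K)=\tau(K)>0$, Proposition~\ref{prop:cablefirstsingbetter} (with $p=2n$) places the first singularity $t_0$ of $\Upsilon_{K_{2n,1}}$ in $\bigl[\tfrac{1}{2n},\,\tfrac{2\tau(K)}{4n\tau(K)-(2n-1)}\bigr]$, and $\tfrac{2\tau(K)}{4n\tau(K)-(2n-1)}<1$ because $(2n-1)(2\tau(K)-1)>0$. Hence both $2n\Upsilon_K$ and $\Upsilon_{K_{2n,1}}$ are linear on $\bigl(0,\tfrac{1}{2n}\bigr)$, so $\Upsilon_{J_n}$ has no singularity there, while the corner of $\Upsilon_{K_{2n,1}}$ at $t_0<1$ is not cancelled in the difference, since $\Upsilon_K$ has no corner in $(0,1)$. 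Therefore the first singularity of $\Upsilon_{J_n}$ is exactly $t_0$, and in particular $t_0\geq \tfrac{1}{2n}$.

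Now apply Corollary~\ref{cor:smallupsilon} to $J_n$: letting $N$ be the smallest integer with $N>1/t_0$, the pair $\{J_n,(J_n)_{p,q}\}$ is linearly independent in $\mathcal{C}$ for every $p\geq N$. Since $t_0\geq\tfrac{1}{2n}$ forces $1/t_0\leq 2n$, we get $N\leq 2n+1$, so every $p>2n$ satisfies $p\geq N$. Finally, for either sign the $(p,\pm1)$ cable pattern $P$ satisfies $P(U)=T_{p,\pm1}=U$, so $P(J_n)$ is topologically slice whenever $J_n$ is, as noted in the introduction; and since $\mathcal{T}\leq\mathcal{C}$, linear independence of $\{J_n,(J_n)_{p,\pm1}\}$ in $\mathcal{C}$ gives the same in $\mathcal{T}$, which is the claim.

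The step requiring care is the bound $t_0\geq\tfrac{1}{2n}$ on the first singularity of $\Upsilon_{J_n}$: this is where the hypothesis $g_4(K)=\tau(K)$ enters, through Proposition~\ref{prop:cablefirstsingbetter} and the resulting genus identity $g_4(K_{2n,1})=2n\tau(K)$, and one must also check that the large cancellation forcing $\tau(J_n)=0$ does not simultaneously cancel the first corner of $\Upsilon_{K_{2n,1}}$. It is this bound that accounts for the hypothesis $p>2n$.
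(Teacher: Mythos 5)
Your proof is correct and follows essentially the same route as the paper: locate the first singularity of $\Upsilon_{J_n}$ in $\bigl[\tfrac{1}{2n},1\bigr)$ via Proposition~\ref{prop:cablefirstsingbetter} applied to the cable summand (using $\Upsilon$--simplicity of $K$ and additivity of $\Upsilon$), then invoke Lemma~\ref{lem:smallupsilon} and Corollary~\ref{cor:smallupsilon}. You merely spell out the non-cancellation of the corner at $t_0$ and the arithmetic $N\leq 2n+1$, which the paper leaves implicit.
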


\begin{proof} Note that $\Upsilon_{J_n = 2nK \# -K_{2n,1}}$ has its first singularity in $\left[\frac{1}{2n},\frac{2\tau(K)}{2n\tau(K)-(2n-1)}\right]$ by Proposition~\ref{prop:cablefirstsingbetter}. 
Then the result follows from Corollary~\ref{cor:smallupsilon} and Lemma~\ref{lem:smallupsilon}.
\end{proof}

Note that several knots satisfy the requirements of the above corollary; for instance, we can take $K=\Wh^+(K')$ where $\tau(K')>0$. When $K=\Wh^+(RHT)$ we have the following corollary.

\begin{corollary}\label{cor:smallupsilon3} Let $K=\Wh^+(RHT)$. Let $J_n = 2nK \# -K_{2n,1}$. Then $\{J_n, (J_n)_{p,\pm1}\}$ is linearly independent in $\mathcal{T}$ for any $p > \frac{1+2n}{2}$. In particular $\{J_1, (J_1)_{p,\pm1}\}$ is linearly independent in $\mathcal{T}$ for any $p \geq 2$.
\end{corollary}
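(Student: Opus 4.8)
The plan is to run the argument behind Corollary~\ref{cor:smallupsilon2} for the companion $K:=\Wh^+(RHT)$, the only new ingredient being that for this particular $K$ we can locate the first singularity of $\Upsilon_{J_n}$ exactly rather than just bounding it. Since $K$ is $\Upsilon$--simple (Proposition~\ref{prop:whiteheaddoubles}) with $\tau(K)=g_4(K)=1$ and $\varepsilon(K)=1$, Lemma~\ref{lem:smallupsilon} applies and tells us that $J_n$ is topologically slice with $\tau(J_n)=0$ and $\upsilon(J_n)<0$. Each $(J_n)_{p,\pm 1}$ is also topologically slice, being a satellite of the topologically slice knot $J_n$ with a pattern whose image on the unknot is the unknot $T_{p,\pm1}$. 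So it suffices to prove linear independence in $\mathcal{C}$.

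First I would show that the first singularity of $\Upsilon_{J_n}$ is $\frac{2}{2n+1}$. Because $K$ is $\Upsilon$--simple with $\tau(K)=1$, the function $\Upsilon_{2nK}(t)=2n\bigl(-1+\lvert 1-t\rvert\bigr)$ is linear of slope $-2n$ on $[0,1]$, so on $(0,1)$ the slope of $\Upsilon_{J_n}=\Upsilon_{2nK}-\Upsilon_{K_{2n,1}}$ changes exactly where the slope of $\Upsilon_{K_{2n,1}}$ does; hence the first singularity of $\Upsilon_{J_n}$ coincides with that of $\Upsilon_{K_{2n,1}}$. By Proposition~\ref{prop:cablefirstsingbetter} (with $p=2n$, using $\tau(K)=g_4(K)=1$) the latter lies in $\bigl[\frac{1}{2n},\frac{2}{2n+1}\bigr]\subseteq(0,1)$, and by the explicit computation of $\Upsilon_{\Wh^+(RHT)_{m,1}}$ in~\cite{KimPark16}, applied with $m=2n$, it equals $\frac{2}{2n+1}$.

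Now Corollary~\ref{cor:smallupsilon} applies to $J_n$: with first singularity $t_0=\frac{2}{2n+1}$, the smallest integer $N$ with $N>\frac{1}{t_0}=\frac{2n+1}{2}$ is $N=n+1$, so $\{J_n,(J_n)_{p,q}\}$ is linearly independent in $\mathcal{C}$ whenever $p\geq n+1$, i.e.\ whenever $p>\frac{1+2n}{2}$. Taking $q=\pm1$ and recalling that $J_n$ and $(J_n)_{p,\pm1}$ are topologically slice promotes this to linear independence in $\mathcal{T}$. (Under the hood: Proposition~\ref{prop:smallupsilon} places the first singularity of $\Upsilon_{(J_n)_{p,\pm1}}$ in $\bigl(0,\frac1p\bigr)$, which for $p>\frac{2n+1}{2}$ lies strictly below $\frac{2}{2n+1}$, hence below every singularity of $\Upsilon_{J_n}$, so Lemma~\ref{lem:linindep} applies with $J_n$ listed first.) For $n=1$ the hypothesis is $p>\frac32$, i.e.\ $p\geq 2$.

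The one step that is not a direct citation is the exact location $\frac{2}{2n+1}$ of the first singularity of $\Upsilon_{\Wh^+(RHT)_{2n,1}}$: Chen's inequality~(\ref{eqn:cheninequality}) together with the estimate $\Upsilon_{K_{2n,1}}(t)\geq-g_4(K_{2n,1})\,t=-2nt$ only forces $\Upsilon_{K_{2n,1}}(t)=-2nt$ on $\bigl[0,\frac1{2n}\bigr]$, so the sharper location genuinely relies on the Kim--Park calculation (or, failing that, on a direct computation of $\upsilon(\Wh^+(RHT)_{2n,1})$ combined with the slope-change constraints of Observation~\ref{obs:slopes}). Everything else is a routine combination of Lemma~\ref{lem:smallupsilon}, Corollary~\ref{cor:smallupsilon}, and Proposition~\ref{prop:cablefirstsingbetter}.
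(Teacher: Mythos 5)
Your proposal is correct and takes essentially the same route as the paper: it combines Lemma~\ref{lem:smallupsilon} and Corollary~\ref{cor:smallupsilon} with the Kim--Park computation to pin the first singularity of $\Upsilon_{J_n}$ at $\frac{2}{1+2n}$, exactly as the paper does via \cite[Theorem C]{KimPark16}. The only difference is that you spell out explicitly why that singularity coincides with the first singularity of $\Upsilon_{K_{2n,1}}$ (linearity of $\Upsilon_{2nK}$ on $[0,1]$), a step the paper leaves implicit in its appeal to Proposition~\ref{prop:upsilon_properties}(1).
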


\begin{proof}For the most part this follows from Corollary~\ref{cor:smallupsilon2}. We can do a bit better since by~\cite[Theorem C]{KimPark16} and Proposition~\ref{prop:upsilon_properties}(1), $\Upsilon_{J_n}$ has its first singularity at $\frac{2}{1+2n}$. Then we can follow the proof of Corollary~\ref{cor:smallupsilon2}.\end{proof}

\bibliographystyle{alpha}
\bibliography{bib}
\end{document}